\newtheorem{proposition}{Proposition}
\newtheorem{theorem}[proposition]{Theorem}
\newtheorem{lemma}[proposition]{Lemma}
\theoremstyle{remark}
\newtheorem{remark}[proposition]{Remark}
\theoremstyle{definition}
\numberwithin{equation}{section}
\numberwithin{proposition}{section}
\numberwithin{figure}{section}
\numberwithin{table}{section}
\newcommand{\N}{\mathbb{N}}
\newcommand{\R}{\mathbb{R}}
\newcommand{\E}{\mathbb{E}}
\renewcommand{\S}{\mathbf{S}}
\newcommand{\C}{\mathcal{Q}}
\newcommand{\basis}{\Sigma}
\newcommand{\eps}{\varepsilon}
\renewcommand{\leq}{\leqslant}
\renewcommand{\geq}{\geqslant}
\renewcommand{\bar}{\overline}
\renewcommand{\tilde}{\widetilde}
\newcommand{\Ll}{\left}
\newcommand{\Rr}{\right}
\renewcommand{\d}{\mathrm{d}}
\newcommand{\D}{D}
\DeclareMathOperator{\tr}{tr}
\DeclareMathOperator{\supp}{supp}
\newcommand{\la}{\left\langle}
\newcommand{\ra}{\right\rangle}
 \newcommand{\cM}{\mathcal{M}}
\newcommand{\identity}{{\mathbf{Id}}}
\newcommand{\one}{{\boldsymbol{1}}}
\newcommand{\Sym}{{\mathrm{Sym}(\D)}}
\newcommand{\ks}{{\mathbf{s}}}
\newcommand{\fR}{{\mathfrak{R}}}
\newcommand{\bsigma}{{\boldsymbol{\sigma}}}
\newcommand{\balpha}{{\boldsymbol{\alpha}}}
\newcommand{\cleq}{\preccurlyeq}
\newcommand{\cgeq}{\succcurlyeq}
\newcommand{\sP}{\mathscr{P}}
\newcommand{\sF}{\mathscr{F}}
\renewcommand*{\dot}[1]{\accentset{\mbox{\large\bfseries .}}{#1}}
\newcommand{\ellipt}{\mathsf{Ellipt}}
\begin{document}

\author{Hong-Bin Chen}
\address{Institut des Hautes \'Etudes Scientifiques, France}
\email{hbchen@ihes.fr}

\keywords{Potts spin glass, Parisi formula, Parisi measure}
\subjclass[2020]{82B44, 82D30}

\title{On Parisi measures of Potts spin glasses with correction}

\begin{abstract}
In the Potts spin glass model, inspired by the symmetry argument in \cite{bates2023parisi} for the constrained free energy, we study the free energy with self-overlap correction. Similarly, we simplify the Parisi-type formula, originally an infimum over matrix-valued paths, into an optimization over real-valued paths, which can be interpreted as quantile functions of probability measures on the unit interval. Minimizers are usually called Parisi measures. Using results in \cite{chen2023Parisi} generalizing the Auffinger--Chen convexity argument, we deduce the uniqueness of the Parisi measure.

The approach in~\cite{bates2023parisi} is to perturb the covariance function of the Hamiltonian into one associated with the generic model, prove the results there, and then reduce the perturbation. Here, we choose to perturb the model by adding an external field parametrized by Ruelle probability cascades. This is used in the Hamilton--Jacobi equation approach and we directly apply results in~\cite{HJ_critical_pts}.
\end{abstract}

\maketitle

\section{Introduction}

We consider the Potts spin glass model and study the associated free energy with self-overlap correction. In the following, we begin by describing the model in Section~\ref{s.setting}. Subsequently, we present two main results. First, in Section~\ref{s.parisi_simple}, we explain how to leverage the symmetry of the Potts spins to simplify the Parisi-type formula for the limit free energy. The first main result is Theorem~\ref{t.1}, which states that the simplified formula optimizes over real-valued paths. Moving on, in Section~\ref{s.PDE_rep}, we reformulate the formula in terms of solutions to the Parisi PDEs. Once again, the formula for the limit free energy optimizes over real-valued paths, which can be interpreted as cumulative distribution functions of probability measures on the real line. This constitutes a part of the second main result, Theorem~\ref{t.main}, which also establishes the uniqueness of minimizers. The probability measures represented by these minimizers are commonly referred to as the Parisi measures. Consequently, Theorem~\ref{t.main} demonstrates the uniqueness of Parisi measures in the Potts spin glass with self-overlap correction.

\subsection{Setting}\label{s.setting}
Fix an integer $D\geq 2$ and let $\basis = \{e_1,\dots,e_\D\}$ be the standard basis of $\R^\D$. 
We interpret $\basis$ as the set of $D$ Potts spins.
We denote by $P_1$ the uniform probability measure on $\basis$. 
For $N\in\N$, 
let $\Sigma_N$ be the state space of configurations of $N$ spins, which are $\D\times N$ matrices with column vectors in $\Sigma$.
We sample $\sigma$ in $\Sigma_N$ by drawing its column vector $\sigma_{\bullet i}=(\sigma_{ki})_{1\leq k\leq \D}$ independently from $P_1$, for each $i\in\{1,\dots,N\}$. More precisely, denoting the distribution of $\sigma$ by $P_N$, we have $\d P_N(\sigma) = \otimes_{i=1}^N \d P_1(\sigma_{\bullet i})$.

For matrices or vectors $a$ and $b$ of the same dimension, we denote by $a\cdot b$ the entry-wise inner product and write $|a|=\sqrt{a\cdot a}$. We denote by $\S^\D$ the linear space of real symmetric $\D\times\D$ matrices, equipped with the entry-wise inner product in $\R^{\D\times \D}$. Occasionally, we write $\la a,b\ra_{\S^\D} = a\cdot b$ to emphasize the ambient space. We denote by $\S^\D_+$ (resp.\ $\S^\D_{++}$) the subset of $\S^\D$ consisting of positive semi-definite (resp.\ definite) matrices. For $a,b \in \R^{\D\times\D}$, we write $a\cgeq b$ and $b\cleq a$ if $a\cdot c \geq b\cdot c$ for every $c\in\S^\D_+$.

Throughout, we assume that $\xi:\R^{\D\times\D}\to\R$ satisfies the following:
\begin{enumerate}[start=1,label={\rm (H\arabic*)}]
    \item \label{i.xi_loc_lip}
    $\xi$ is differentiable and $\nabla\xi$ is locally Lipschitz;
\item  \label{i.xi_sym}
    $\xi\geq 0$ on $\S^\D_+$, $\xi(0)=0$, and $\xi(a)=\xi(a^\intercal)$ for all $a\in \R^{\D\times\D}$;
    \item \label{i.xi_incre} if $a,\,b\in\S^\D_+$ satisfies $a\cgeq b$, then $\xi(a)\geq \xi(b)$ and $\nabla \xi(a)\cgeq \nabla \xi(b)$.
\end{enumerate}
Here, differentiability and the derivative of $\xi$ are defined with respect to the inner product on $\R^{\D\times \D}$. By~\ref{i.xi_sym} and \ref{i.xi_incre}, we can deduce that $\nabla\xi(s) \in \S^\D_+$ for $a\in\S^\D_+$. 
We further assume that $\xi$ is symmetric in the following sense.
Let $\Sym$ be the permutation group acting on $\{1,\dots,\D\}$. For $a\in\R^{\D\times \D}$ and $\ks \in \Sym$, we write
\begin{align}\label{e.a^s=}
    a^\ks = \Ll(a_{\ks(k),\ks(k')}\Rr)_{1\leq k,k'\leq \D}.
\end{align}
Additionally, we assume
\begin{enumerate}[start=1,label={\rm (H4)}]
\item \label{i.xi(a)=xi(a^s)} $\xi(a) = \xi\Ll(a^\ks\Rr)$ for every $ a \in\R^{\D\times \D}$ and every $ \ks \in \Sym$.
\end{enumerate}

Assume that, for each $N\in\N$, there exists a centered Gaussian process $(H_N(\sigma))_{\sigma\in \Sigma_N}$ with covariance
\begin{align}\label{e.xi_cov}
    \E H_N(\sigma)H_N(\sigma') = N\xi\Ll(\frac{\sigma\sigma'^\intercal}{N}\Rr),\quad\forall \sigma,\sigma'\in\Sigma_N.
\end{align}
We consider
\begin{align}\label{e.F_N=}
    F_N = \frac{1}{N}\E \log\int \exp\Ll(H_N(\sigma) - \frac{N}{2} \xi\Ll(\frac{\sigma\sigma^\intercal}{N}\Rr)\Rr)\d P_N(\sigma).
\end{align}
We call $- \frac{N}{2} \xi\Ll(\frac{\sigma\sigma^\intercal}{N}\Rr)$ the \textit{self-overlap correction}. To stress the difference from the standard definition of free energy without correction, we call $F_N$ the \textit{free energy with self-overlap correction}. Note that the correction is equal to $-\frac{1}{2}\E H_N(\sigma)H_N(\sigma)$ which resembles the drift term in an exponential martingale.

\subsection{Parisi-type formulas and simplification via symmetry of spins}\label{s.parisi_simple}
When $\xi$ is convex, Panchenko proved in \cite{pan.potts} that the standard free energy without correction converges to a generalized Parisi-type variational formula of the form ``$\sup\inf$''. 
Recall that the classical Parisi formula for the Sherrington--Kirkpatrick (SK) model is an infimum. For the Potts model, the additional supremum is needed because the \textit{self-overlap} $\frac{\sigma\sigma}{N}$ is not constant and an extra variation over possible values of the self-overlap is required.
With self-overlap correction, we can bypass this issue and get that, if $\xi$ is convex on $\S^\D_+$, then
\begin{align}\label{e.limF_N=infsP}
    \lim_{N\to\infty} F_N = \inf_{\pi\in\Pi} \sP(\pi)
\end{align}
resembling the classical Parisi formula. This is proved in~\cite[Corollary~8.3]{HJ_critical_pts} (also see \cite[Remark~8.5]{HJ_critical_pts}).
Here, the infimum is taken over
\begin{align}\label{e.Pi}
    \Pi = \Ll\{\pi:[0,1]\to\S^\D_+\,\big|\, \text{$\pi$ is left-continuous and increasing}\Rr\}
\end{align}
where $\pi$ is said to be increasing if $\pi(t)\cgeq \pi(s)$ whenever $t\geq s$.
The definition of the Parisi-type functional $\sP$ involves the Ruelle probability cascade and is postponed to~\eqref{e.sP=}.

Since the limit in~\eqref{e.limF_N=infsP} is an infimum, one can employ existing techniques for the SK model. By adding a deterministic external field of the form $h\cdot \sigma\sigma^\intercal$ to $F_N$ and using methods for the generic mixed $p$-spin models as in \cite[Section~3.7]{pan}, one can show that the self-overlap actually converges. Using the symmetry of $P_1$ under permutation of spin types, one can further identify the limit of self-overlap to be $\frac{1}{\D}\identity_\D$ and deduce that, if $\xi$ is convex on $\S^\D_+$, then
\begin{align}\label{e.limF_N=infsP_fix_endpt}
    \lim_{N\to\infty}F_N = \inf_{\pi\in\Pi(\frac{1}{\D}\identity_\D)} \sP(\pi)
\end{align}
for $\Pi(\frac{1}{\D}\identity_\D) = \Ll\{\pi\in\Pi:\pi(1) = \frac{1}{\D}\identity_\D\Rr\}$. Here and throughout, $\identity_\D$ is the $\D\times\D$ identity matrix. These results are recorded in \cite[Corollary~1.2]{chen2023on}.

Since $\Pi(\frac{1}{\D}\identity_\D)$ still consists of matrix-valued paths,
we want to further exploit the symmetry to refine the formula as in \cite{bates2023parisi} in the context of constrained free energy.
Let us explain this.
Heuristically, if $\pi$ is a minimizing path, then the law of $\pi(U)$ is equal to the limit distribution of the \textit{overlap} $\frac{\sigma\sigma'^\intercal}{N}$, where $U$ is the uniform random variable over $[0,1]$. Since spins take value in $\Sigma$, we always have $\sum_{k,k'=1}^\D \Ll(\frac{\sigma\sigma'^\intercal}{N}\Rr)_{k,k'} =1$. Also, the symmetry of $P_1$ implies $\frac{\sigma\sigma'^\intercal}{N}\stackrel{\d}{=} \Ll(\frac{\sigma\sigma'^\intercal}{N}\Rr)^\ks$ for every $\ks\in\Sym$. Using these two properties, one can deduce that $\pi$ satisfies $\sum_{k,k'}\pi_{k,k'}=1$, that the diagonal entries of $\pi$ are identical, and that the off-diagonal entries are also identical. Hence, $\pi$ takes the form $\pi = \Psi\Ll(\frac{\D\tr(\pi)-1}{\D-1}\Rr)$ where
\begin{align}\label{e.Psi_potts}
    \Psi(s) = \frac{s}{\D}\identity_\D + \frac{1-s}{\D^2}\one_\D,\quad\forall s\in[0,1].
\end{align}
Here and throughout, $\one_\D = (1)_{1\leq i,j\leq \D}\in\R^{\D\times\D}$ has all entries equal to $1$.
This leads to the following result.
\begin{theorem}\label{t.1}
    If $\xi$ is convex on $\S^\D_+$, then
    \begin{align}\label{e.limF_N=inf_zeta}
        \lim_{N\to\infty} F_N = \inf_{\zeta} \sP\Ll(\Psi\circ \zeta\Rr)
    \end{align}
    where the infimum is taken over all left-continuous and increasing $\zeta:[0,1]\to[0,1]$.
\end{theorem}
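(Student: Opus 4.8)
The plan is to establish the two inequalities in~\eqref{e.limF_N=inf_zeta} separately, using~\eqref{e.limF_N=infsP_fix_endpt} as the starting point. Since $\Psi(s)\in\S^\D_+$ for every $s\in[0,1]$, $\Psi(1)=\frac1\D\identity_\D$, and $\Psi$ is affine with $\Psi(s)-\Psi(s')=\frac{s-s'}{\D}\identity_\D-\frac{s-s'}{\D^2}\one_\D = (s-s')\bigl(\frac1\D\identity_\D-\frac1{\D^2}\one_\D\bigr)$, one checks that the constant matrix $\frac1\D\identity_\D-\frac1{\D^2}\one_\D$ lies in $\S^\D_+$ (it is $\frac1\D$ times the orthogonal projection onto the orthogonal complement of $\one_\D\in\R^\D$), so $s\mapsto\Psi(s)$ is increasing in the $\cgeq$ order; hence $\Psi\circ\zeta\in\Pi(\frac1\D\identity_\D)$ for every left-continuous increasing $\zeta:[0,1]\to[0,1]$. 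This immediately gives ``$\geq$'' in~\eqref{e.limF_N=inf_zeta}: the right-hand side is an infimum of $\sP$ over a subset of $\Pi(\frac1\D\identity_\D)$, hence at least the infimum in~\eqref{e.limF_N=infsP_fix_endpt}, which equals $\lim_N F_N$.

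For the reverse inequality ``$\leq$'', the idea is the symmetrization argument sketched just before the theorem: given an arbitrary $\pi\in\Pi(\frac1\D\identity_\D)$, I want to produce $\zeta$ with $\sP(\Psi\circ\zeta)\leq\sP(\pi)$, so that the constrained infimum is not increased by restricting to paths of the form $\Psi\circ\zeta$. The natural candidate is to symmetrize $\pi$ over the spin-permutation group: set $\bar\pi(t)=\frac1{\D!}\sum_{\ks\in\Sym}\pi(t)^\ks$. By~\ref{i.xi(a)=xi(a^s)} the functional $\sP$ is invariant under the action $\pi\mapsto\pi^\ks$ (the Ruelle-cascade construction entering~\eqref{e.sP=} only sees $\xi$ and the matrix structure through $\xi$), and by a convexity/Jensen argument along the lines used for constrained free energies in~\cite{bates2023parisi} one expects $\sP(\bar\pi)\leq\frac1{\D!}\sum_\ks\sP(\pi^\ks)=\sP(\pi)$. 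The symmetrized path $\bar\pi$ has all diagonal entries equal and all off-diagonal entries equal at each time $t$; combined with the trace identity coming from the constraint (the total mass of the overlap is $1$, reflected in $\sum_{k,k'}\pi_{k,k'}(t)$ being pinned appropriately, or at least at the endpoint), $\bar\pi(t)$ is forced to lie on the one-parameter affine family $\{\Psi(s):s\in[0,1]\}$, so $\bar\pi=\Psi\circ\zeta$ with $\zeta(t)=\frac{\D\tr(\bar\pi(t))-1}{\D-1}$. Left-continuity and monotonicity of $\zeta$ follow from those of $\pi$ together with the fact that $a\mapsto\tr(a)$ is $\cgeq$-monotone and linear, and $\zeta$ takes values in $[0,1]$ because $\bar\pi(t)\in\S^\D_+$ with the right total mass forces $\tr(\bar\pi(t))\in[\frac1\D,1]$.

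The main obstacle I anticipate is the convexity step $\sP(\bar\pi)\leq\frac1{\D!}\sum_\ks\sP(\pi^\ks)$: one needs to know that $\sP$, as defined in~\eqref{e.sP=}, is convex in $\pi$ on $\Pi$, and this is exactly the kind of statement that requires care with the Ruelle-probability-cascade representation and the hypotheses~\ref{i.xi_incre}. A second, subtler point is justifying that an arbitrary near-optimal $\pi$ can be assumed to already satisfy the pointwise mass identity $\sum_{k,k'}\pi_{k,k'}(t)=1$ for all $t$ (not merely at $t=1$); this is where the identity $\frac{\sigma\sigma'^\intercal}{N}\stackrel{\d}{=}(\frac{\sigma\sigma'^\intercal}{N})^\ks$ and $\sum_{k,k'}(\frac{\sigma\sigma'^\intercal}{N})_{k,k'}=1$ enter, and it may instead be cleaner to carry the symmetrization through $\sP$ directly and read off that $\Psi\circ\zeta$ is automatically in $\Pi(\frac1\D\identity_\D)$ without needing the pointwise identity for general $\pi$. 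Once these are in hand, taking the infimum over $\pi\in\Pi(\frac1\D\identity_\D)$ on the left and recognizing the right side as $\inf_\zeta\sP(\Psi\circ\zeta)$ closes the argument.
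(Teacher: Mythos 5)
The ``$\geq$'' direction you give is correct and matches what one would expect: since $\Psi$ is $\cgeq$-increasing with $\Psi(1)=\frac1\D\identity_\D$, paths of the form $\Psi\circ\zeta$ form a subset of $\Pi(\frac1\D\identity_\D)$, and the inequality follows from~\eqref{e.limF_N=infsP_fix_endpt}. The ``$\leq$'' direction, however, contains two genuine gaps, and the paper's proof takes a very different (probabilistic, perturbative) route precisely because these gaps are hard to fill.

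First, the symmetrization $\bar\pi=\frac1{\D!}\sum_\ks\pi^\ks$ does \emph{not} land in the one-parameter family $\{\Psi(s):s\in[0,1]\}$ for general $\pi\in\Pi(\frac1\D\identity_\D)$. Symmetrization forces all diagonal entries of $\bar\pi(t)$ to agree and all off-diagonal entries to agree, but it preserves the total mass $\sum_{k,k'}\pi_{k,k'}(t)$, and there is no constraint in $\Pi(\frac1\D\identity_\D)$ forcing that mass to equal $1$ for $t<1$. (For instance, take $\D=2$ and $\pi(t)=\frac14\identity_2$ for small $t$: this belongs to $\Pi(\frac12\identity_2)$, is already $\Sym$-invariant, and has total mass $\frac12\ne1$.) The image of symmetrization is thus a two-parameter family (trace and total mass), not $\Psi([0,1])$. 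This is exactly the ``subtler point'' you flagged, and it is not a cosmetic issue: the constraint $\sum_{k,k'}=1$ comes from the Potts spin structure of the overlap $\frac{\sigma\sigma'^\intercal}N$, not from membership in $\Pi(\frac1\D\identity_\D)$, and some argument is needed to carry it over to paths. Your proposed fix (``read off that $\Psi\circ\zeta$ is automatically in $\Pi(\frac1\D\identity_\D)$'') only re-establishes the easy inclusion and does not address this.

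Second, the convexity $\sP(\bar\pi)\leq\frac1{\D!}\sum_\ks\sP(\pi^\ks)$ is not available and is not used in~\cite{bates2023parisi} either, contrary to what you suggest. Convexity of $\sP$ over matrix-valued paths in $\Pi$ would be far stronger than anything currently known; indeed, it would immediately give uniqueness of the minimizer in~\eqref{e.limF_N=infsP_fix_endpt}, which the paper explicitly states is open. The convexity that is actually proved (in Theorem~\ref{t.main}, via~\cite{chen2023Parisi}) is convexity of $\sF$ in the c.d.f.\ variable $\alpha\in\cM$, which is a different and much more delicate statement in the spirit of Auffinger--Chen; it does not imply convexity of $\sP$ in $\pi$.

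The paper's actual strategy bypasses both problems by working on the dual (supremum) side. Writing $F_N=-f_N(\frac12,0)$ and using the Hamilton--Jacobi representation, the formula $f(t,q)=\sup_p\{\psi(q+t\nabla\xi\circ p)-t\int\theta(p)\}$ (Proposition~\ref{p.f(t,q)=}) has a \emph{maximizer} $p$ with a probabilistic description as the limit of conditional overlap expectations $p_N(\alpha\wedge\alpha')=\E\la\frac{\sigma\sigma'^\intercal}N\,|\,\alpha\wedge\alpha'\ra_{N,t,q}$ (Proposition~\ref{p.p}). Because $p_N$ is built directly from Potts overlaps, it inherits both the $\Sym$-invariance of the Gibbs measure (Lemma~\ref{l.Gibbs_inv}) and the identity $\sum_{k,k'}(p_N)_{k,k'}=1$, forcing $p=\Psi\circ\zeta$ after passing to the limit (Lemma~\ref{l.p_inv}). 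One perturbs by a small $q_K\in\C_\uparrow$ to make Proposition~\ref{p.p} applicable and then sends $K\to\infty$ using the Lipschitz estimate~\eqref{e.lip_psi_f}. In short, the information you are missing on the infimum side is available for free on the supremum side because the maximizer there is an honest limit of overlaps; your approach would need an additional mechanism (either the generic-perturbation argument of~\cite{bates2023parisi} or the cascade-perturbation argument of this paper) to restrict the infimum to the unit-mass slice before symmetrization can finish the job.
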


\subsection{Representation by Parisi PDE solutions and uniqueness of minimizers}\label{s.PDE_rep}
The right-hand side in~\eqref{e.limF_N=inf_zeta} resembles the classical Parisi formula for the SK model where the infimum is also taken over paths in one dimension.
It is well-known \cite{aufche} that the Parisi formula in the SK model has a unique minimizer. Hence, it is natural to investigate the uniqueness of the minimizer in~\eqref{e.limF_N=inf_zeta}.

Before proceeding, we point out a nuance in the interpretations of paths.
For the classical Parisi formula, the one-dimensional path is viewed as the cumulative distribution function (c.d.f.) of a probability measure on $[0,1]$. Here, the correct interpretation of $\zeta$ is the \textit{quantile function} of a probability measure, which is the left-continuous inverse of the c.d.f. 

The uniqueness result in~\cite{aufche} was proved by showing that the Parisi functional is strictly convex over c.d.f.s.
To adopt a similar approach, we need to rewrite~\eqref{e.limF_N=inf_zeta} in terms of c.d.f.s. In the SK model, a natural way to express the Parisi functional in c.d.f.s is through solutions of the Parisi PDE. Since Potts spins are vector-valued, we need to consider generalized Parisi PDEs.

To describe the family of PDEs, we set
\begin{gather}
    \cM = \Ll\{\alpha:[0,1]\to[0,1]\,\big|\,\text{$\alpha$ is increasing and right-continuous; $\alpha(0)=0$; $\alpha(1)=1$}\Rr\};\label{e.cM}
    \\
    \mu(s) = \nabla\xi(\Psi(s)),\quad\forall s\in[0,1]. \label{e.mu=}
\end{gather}
We interpret $\cM$ as the set of c.d.f.s of probability measures on $[0,1]$. 
We write $\dot\mu(s) = \frac{\d}{\d s}\mu(s)$, which exists for almost every $s\in[0,1]$ due to Rademacher's theorem.

For each $\alpha\in\cM$, we consider the backward parabolic PDE
\begin{align}\label{e.Parisi_PDE}
    \partial_s\Phi_\alpha(s,x) + \frac{1}{2}\la \dot\mu(s), \nabla^2\Phi_\alpha(s,x)+\alpha(s)(\nabla\Phi_\alpha(s,x))(\nabla\Phi_\alpha(s,x))^\intercal \ra_{\S^\D}=0
\end{align}
for $(s,x)\in[0,1]\times \R^\D$ with terminal condition
\begin{align}\label{e.terminal_cond}
    \Phi_\alpha(1,x) = \log\int\exp\Ll(x\cdot \sigma-\frac{1}{2}\mu(1)\cdot\sigma\sigma^\intercal\Rr)\d P_1(\sigma).
\end{align}
The notion of solutions and well-posedness will be recalled in Section~\ref{s.pf}. 
We also need the function $\theta:\R^{\D\times\D}\to\R$ given by 
\begin{align}\label{e.theta}
    \theta(a) = a\cdot\nabla\xi(a) -\xi(a),\quad\forall a \in \R^{\D\times \D}.
\end{align}
For $\alpha\in \cM$, we define
\begin{align}\label{e.sF}
    \sF(\alpha) = \E \Ll[\Phi_\alpha\Ll(0,\sqrt{\mu(0)}\eta\Rr)\Rr]+\frac{1}{2}\theta(\Psi(1))-\frac{1}{2}\int_0^1\alpha(s)\Psi(s)\cdot \dot\mu(s)\d s
\end{align}
where $\eta$ is the standard $\R^\D$-valued Gaussian vector.
We will show in Lemma~\ref{l.sF=sP} that $\sF(\alpha) = \sP(\Psi\circ\alpha^{-1})$ if $\alpha$ is a step function, where $\alpha^{-1}$ is the left-continuous inverse of $\alpha$. Our main result is stated below.

\begin{theorem}\label{t.main}
Suppose that 
$\xi$ satisfies either one of the following:
\begin{enumerate}
    \item \label{i.t.main_1} $\dot\mu$ exists everywhere on $[0,1]$ and $\dot\mu(s)\in\S^\D_{++}$ for almost every $s\in[0,1]$ with respect to the Lebesgue measure;
    \item \label{i.t.main_2} $\xi(a) =\beta^2|a|^2$ for all $a\in\R^{\D\times \D}$ and some $\beta>0$.
\end{enumerate}
If $\xi$ is convex on $\S^\D_+$, then
\begin{align}\label{e.limF_N=infF}
    \lim_{N\to\infty} F_N = \inf_{\alpha\in\cM}\sF(\alpha).
\end{align}
Moreover, $\cM\ni\alpha\mapsto \sF(\alpha)$ is strictly convex and $\inf_{\alpha\in\cM}\sF(\alpha)$ has a unique minimizer.
\end{theorem}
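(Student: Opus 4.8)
The plan is to derive the variational formula~\eqref{e.limF_N=infF} from Theorem~\ref{t.1} through the change of variables between quantile functions and cumulative distribution functions, and then to obtain strict convexity --- hence uniqueness of the minimizer --- from the generalized Auffinger--Chen analysis of~\cite{chen2023Parisi} applied to the Parisi PDE~\eqref{e.Parisi_PDE}. For the formula, the starting point is that for a step function $\alpha\in\cM$ the generalized left-continuous inverse $\alpha^{-1}$ is a step function in the class of left-continuous increasing maps $[0,1]\to[0,1]$ over which the infimum in~\eqref{e.limF_N=inf_zeta} runs, and Lemma~\ref{l.sF=sP} gives $\sF(\alpha)=\sP(\Psi\circ\alpha^{-1})$; hence $\sF$ and the functional $\zeta\mapsto\sP(\Psi\circ\zeta)$ take the same set of values on step paths, and (the correspondence $\alpha\leftrightarrow\alpha^{-1}$ matching the relevant step paths up to boundary conventions that do not affect the infimum) they have the same infimum over step paths. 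Upgrading to the full classes uses continuity: on the $\sP$-side this is part of the construction of $\sP$ in~\eqref{e.sP=} and is used in~\cite{HJ_critical_pts}, while on the $\sF$-side it follows from the Lipschitz dependence of the Parisi PDE solution $\Phi_\alpha(0,\cdot)$ on $\alpha$ recalled in Section~\ref{s.pf}, together with the elementary bound on the affine term $-\frac12\int_0^1\alpha(s)\Psi(s)\cdot\dot\mu(s)\,\d s$ (here $\dot\mu\in L^\infty$, since $\mu$ is Lipschitz on $[0,1]$ by~\ref{i.xi_loc_lip} and the smoothness of $\Psi$). Combined with Theorem~\ref{t.1}, this yields $\lim_{N\to\infty}F_N=\inf_{\alpha\in\cM}\sF(\alpha)$.

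For convexity, write $\sF(\alpha)=\E\bigl[\Phi_\alpha\bigl(0,\sqrt{\mu(0)}\eta\bigr)\bigr]+\frac12\theta(\Psi(1))-\frac12\int_0^1\alpha(s)\Psi(s)\cdot\dot\mu(s)\,\d s$: the middle term is constant and the third is affine in $\alpha$, so convexity of $\sF$ is equivalent to convexity of $\alpha\mapsto\E[\Phi_\alpha(0,\sqrt{\mu(0)}\eta)]$. By~\cite{chen2023Parisi}, the map $\cM\ni\alpha\mapsto\Phi_\alpha(s,x)$ is convex for each fixed $(s,x)$, and taking the expectation over $\eta$ preserves convexity; hence $\sF$ is convex on $\cM$.

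For strict convexity, note again that the affine term in $\sF$ does not affect strictness, so it suffices to show that $\alpha\mapsto\E[\Phi_\alpha(0,\sqrt{\mu(0)}\eta)]$ is strictly convex; this is exactly the content of the generalized Auffinger--Chen strict convexity result of~\cite{chen2023Parisi}, provided the Parisi PDE~\eqref{e.Parisi_PDE} is non-degenerate in the required sense. Under hypothesis~\eqref{i.t.main_1}, $\dot\mu(s)\in\S^\D_{++}$ for a.e.\ $s$, so~\eqref{e.Parisi_PDE} is uniformly elliptic and~\cite{chen2023Parisi} applies directly. Under hypothesis~\eqref{i.t.main_2}, $\mu(s)=2\beta^2\Psi(s)$ and $\dot\mu(s)=\frac{2\beta^2}{\D}\bigl(\identity_\D-\frac{1}{\D}\one_\D\bigr)$ is positive definite on $\one_\D^\perp$ but vanishes along $\one_\D$; however $\mu(1)=\frac{2\beta^2}{\D}\identity_\D$ has equal diagonal entries, so the terminal condition~\eqref{e.terminal_cond} satisfies $\Phi_\alpha(1,x+c\one_\D)=c+\Phi_\alpha(1,x)$ for all $c\in\R$, and by uniqueness of the PDE solution this translation covariance propagates to every $s\in[0,1]$. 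Thus $\Phi_\alpha(s,\cdot)$ is governed by a uniformly elliptic Parisi PDE on $\one_\D^\perp\cong\R^{\D-1}$ (with effective diffusion $\frac{2\beta^2}{\D}$ times the identity), to which~\cite{chen2023Parisi} again applies. In both cases $\sF$ is strictly convex on the convex set $\cM$, so it has at most one minimizer; existence of a minimizer follows from the results of~\cite{HJ_critical_pts} invoked above (the minimizer being the limiting law of the overlap under the cascade perturbation), which completes the proof.

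The step I expect to be the main obstacle is the degenerate case~\eqref{i.t.main_2}: one must justify carefully that the $\one_\D$-translation covariance legitimately reduces the matrix Parisi PDE to a genuinely uniformly elliptic equation on $\one_\D^\perp\cong\R^{\D-1}$ to which the results of~\cite{chen2023Parisi} apply verbatim. A secondary, more routine, point is verifying that the path classes and the continuity/approximation statements line up cleanly enough to pass from step paths to all of $\cM$ in the first part.
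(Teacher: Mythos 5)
Your proposal is correct and follows essentially the same route as the paper: pass between quantile functions and c.d.f.s using Lemma~\ref{l.sF=sP} on step functions together with the Lipschitz continuity of $\zeta\mapsto\sP(\Psi\circ\zeta)$ and $\alpha\mapsto\sF(\alpha)$ to establish~\eqref{e.limF_N=infF}, then deduce strict convexity and uniqueness from the generalized Auffinger--Chen analysis of~\cite{chen2023Parisi}. The only difference is that the paper resolves the convexity step by directly invoking \cite[Theorem~1.2]{chen2023Parisi} under hypothesis~\eqref{i.t.main_1} and \cite[Theorem~5.1]{chen2023Parisi} under hypothesis~\eqref{i.t.main_2}, so the ``main obstacle'' you flag (handling the degenerate direction $\one_\D$ via translation covariance of~\eqref{e.terminal_cond}) is exactly the content of that second cited theorem rather than something you need to reprove here.
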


The strict convexity is understood in the following sense: for distinct $\alpha_0,\alpha_1\in\cM$, we have $(1-\lambda)\sF(\alpha_0)+\lambda\sF(\alpha_1)< \sF((1-\lambda)\alpha_0+\lambda \alpha_1)$ for all $\lambda \in(0,1)$.

\begin{remark}
When $\xi$ satisfies~\eqref{i.t.main_2}, we simply have $\mu =2\beta \Psi$ and $\dot\mu= 2\beta(\frac{1}{\D}\identity_\D-\frac{1}{\D^2}\one_\D)$.
Notice that $\D\identity_\D -\one_\D$ is not positive definite (the vector with equal entries is an eigenvector with the zero eigenvalue; see \cite[(5.12) and (5.13)]{chen2023Parisi}).
Hence, \eqref{i.t.main_1} and \eqref{i.t.main_2} have no overlap.
\end{remark}

\begin{remark}The two conditions~\eqref{i.t.main_1} and~\eqref{i.t.main_2} cover the case of symmetric mixed-$p$ interactions. More precisely, consider $\xi$ given by $\xi(a) = \sum_{p=2}^\infty \beta_p^2 \sum_{k,k'=1}^\D a_{k,k'}^p$ for all $a\in\R^{\D\times\D}$ where $(\beta_p)_{p=2}^\infty$ is a sequence of nonnegative numbers decaying sufficiently fast. The process $(H_N(\sigma))_{\sigma\in\Sigma_N}$ can be constructed explicitly (for instance, see the beginning of \cite[Section~5]{chen2023Parisi}). If $\beta_p>0$ for some $p\geq 3$, then $\xi$ satisfies~\eqref{i.t.main_1}, which is verified in \cite[Lemma~5.2]{chen2023Parisi}. If $\beta_2>0$ and $\beta_p=0$ for all $\beta\geq 3$, then $\xi$ is of the form in~\eqref{i.t.main_2}. 
\end{remark}

\subsection{Parisi measures}

In the SK model, the Parisi formula is an infimum over the collection $\cM$.
A minimizer of the Parisi formula, viewed as a probability measure on $[0,1]$, is called a \textit{Parisi measure}. In the Potts spin glass, we can interpret the minimizers of $\inf_{\alpha\in\cM} \sF(\alpha)$ as Parisi measures by identifying $\alpha$ with the measure it presents. Theorem~\ref{t.main} implies the uniqueness of the Parisi measure. 
However, there are notable limitations to this result.

The uniqueness is only proven after we refined the variational formula into~\eqref{e.limF_N=infF}. We do not know the uniqueness of minimizers in~\eqref{e.limF_N=infsP} or~\eqref{e.limF_N=infsP_fix_endpt} (where the law of $\pi(U)$ for a minimizer $\pi$ can also be a good definition of Parisi measures which live on $\S^\D_+$).
Also, to study further properties of the Parisi measure, it would be beneficial if the Parisi measure could be the unique minimizer over larger collections.

Also, our results require the self-overlap correction. There is a way to remove the correction as in \cite[Section~5]{mourrat2020extending} and \cite[Section~5]{chen2023self}, but this needs to add external fields of the form $h\cdot\sigma\sigma^\intercal$ for $h\in \S^\D$. Since the reduction to~\eqref{e.limF_N=infF} relies on the symmetry of $P_1$ and since the external field in general breaks this symmetry, we do not know how to extend our results to the standard setting without the correction. 

Nevertheless, we believe that the uniqueness of the currently defined Parisi measure could allow further investigations into the Potts spin glass.

\subsection{Related works}

The Potts spin glass was introduced by Elderfield and Sherrington in~\cite{elderfield1983curious} and has since been extensively studied in physics \cite{caltagirone2012dynamical, de1995static, elderfield1983novel, gross1985mean, marinari1999glassy, nishimori1983gauge}.

For the SK model, the Parisi formula was initially proposed in \cite{parisi79,parisi80} and later mathematically established in \cite{gue03,Tpaper}.
Generalizations were made in various settings \cite{pan05,tal.sph,panchenko2014parisi,chen2013aizenman,pan.multi,bates2022free,pan.potts,pan.vec,mourrat2020extending}. Among them, the free energy of the Potts spin glass with quadratic interaction was identified in \cite{pan.potts} and general interactions in~\cite{pan.vec}.

The self-overlap correction appeared in the Hamilton--Jacobi equation approach by Mourrat \cite{mourrat2022parisi,mourrat2020extending,mourrat2021nonconvex,mourrat2023free}. Spin glass models with correction have been recently considered in \cite{chen2022pde,chen2023self,chen2023on,chen2023Parisi}. For more detail on the Hamilton--Jacobi equation approach to statistical mechanics, we refer to~\cite{HJbook}.

Recently, Bates and Sohn considered the constrained free energy in the Potts spin glass~\cite{bates2023parisi}. When the constraint is balanced, they proved that the Parisi formula for the constraint free energy optimizes over paths with values in $\Psi([0,1])$. Their method inspired this work. Theorem~\ref{t.1} is a version of their result for the free energy with self-overlap correction. To make the heuristic derivation of~\eqref{e.Psi_potts} rigorous, \cite{bates2023parisi} introduces a perturbation $\tilde\xi$ of $\xi$ so that $\tilde\xi$ is generic in the sense that it includes sufficiently many interactions of various orders. By differentiating the limit free energy associated with $\tilde\xi$ in terms of the inverse temperature for each interaction, one can obtain information to determine the limit distribution of the overlap, which minimizes the Parisi formula. When $\xi$ is symmetric in the sense of~\ref{i.xi(a)=xi(a^s)}, $\tilde\xi$ can be chosen to be symmetric. Varying the action of $\Sym$ on the spin types, the symmetry of $\tilde\xi$ implies that the limit overlap distribution is supported on $\Psi([0,1])$. Finally, the continuity of the free energy and the Parisi formula in terms of $\xi$ yields the result for $\xi$.

Our strategy for Theorem~\ref{t.1} is essentially the same, but we use a different kind of perturbation that appears in the Hamilton--Jacobi approach. We keep $\xi$ intact and add an external field parameterized by a Ruelle probability cascade. We use the differentiability of the Parisi formula with respect to the cascade parameter to gain information on the limit distribution of the overlap. We directly cite relevant results proved in \cite{HJ_critical_pts} to avoid repeating technical details. By choosing the external field to be symmetric, we can conclude Theorem~\ref{t.1} through similar arguments via symmetry and continuity as above.

The proof of Theorem~\ref{t.main} requires results on the Parisi PDE.
In the setting of the SK model, the Parisi PDE has been widely studied in \cite{talagrand2006parisi,auffinger2015properties,aufche,talagrand2006parisi}. 
The uniqueness of the Parisi measure was predicted in \cite{mezard1987spin}. The strict convexity of the Parisi functional was investigated in \cite{panchenko2005question,panchenko2005question,talagrand2006parisi,Tpaper,bovklim,chen2015partial,jagtob} and was proved in \cite{aufche}.
The multi-dimensional Parisi PDEs were considered in \cite{bovklim} in the setting of vector spin glasses with quadratic interaction. For general interactions, such Parisi PDEs were recently revisited in \cite{chen2023Parisi}.

\subsection{Acknowledgements}
The author is grateful to Erik Bates from whom the author learned the symmetry argument appearing in \cite{bates2023parisi}. This project has received funding from the European Research Council (ERC) under the European Union’s Horizon 2020 research and innovation programme (grant agreement No.\ 757296).

\section{Enriched free energy}\label{s.enriched_free_energy}

To make the argument leading towards~\eqref{e.Psi_potts} rigorous, we need to consider a perturbed version of $F_N$. In short, one needs the Hamiltonian to be ``rich'' enough to determine the limit overlap. Here, we use the enrichment in the Hamilton--Jacobi equation approach.

We recall the setting in~\cite{HJ_critical_pts} and some useful results.
Set
\begin{align*}
    \C = \Ll\{q:[0,1)\to\S^\D_+\,\big|\, t\geq s \implies q(t)\cgeq q(s)\Rr\}.
\end{align*}
We identify elements in $\C$ if they coincide almost everywhere on $[0,1)$ with respect to the Lebesgue measure. 
(In~\cite[(1.4)]{HJ_critical_pts}, in addition to monotonicity, $q\in\C$ is required to be càdlàg. But due to the monotonicity and the aforementioned identification, this is not important.)
Let $L^\infty$ be the collection of maps $\kappa:[0,1)\to\S^\D$ such that $\|\kappa\|_{L^\infty} = \mathrm{ess\,sup}_{s\in[0,1]}|\kappa(s)|<\infty$. For brevity, we write $\C_\infty = \C\cap L^\infty$.
For every $q\in \C_\infty$, we always set $q(1) = \lim_{s\nearrow 1}q(s)$ which exists due to the monotonicity. 

Let $\fR$ be the Ruelle probability cascade on some separable Hilbert space $\mathfrak{H}$ with overlap distributed uniformly over $[0,1]$. More precisely (see in \cite[Section~2.3]{pan} or \cite[Section~5.6]{HJbook}), $\fR$ is a random probability measure on the unit sphere of $\mathfrak{H}$ satisfying the following. Denoting the inner product on $\mathfrak{H}$ by $\alpha \wedge\alpha'$ and sampling $(\alpha^l)_{l\in\N}$ independently from $\mathfrak{R}^{\otimes\infty}$, we have that $(\alpha^l\wedge\alpha^{l'})_{l,l'\in\N}$ satisfies the Ghirlanda--Guerra identities under $\E\mathfrak{R}^{\otimes\infty}$ (whose exact expressions are not needed and we refer to \cite[(2.106)]{pan}). Moreover, the overlap $\alpha^1\wedge\alpha^2$ distributes uniformly over $[0,1]$ under $\E\fR^{\otimes 2}$. The existence of $\fR$ is given by \cite[Theorem~2.17]{pan}.

For almost every realization of $\fR$ and every $q\in\C_\infty$, there exists an $\R^\D$-valued centered Gaussian process $(w^q(\alpha))_{\alpha \in\supp\fR}$ with covariance
\begin{align}\label{e.w^q}
    \E w^q(\alpha)w^q(\alpha')^\intercal = q\Ll(\alpha\wedge\alpha'\Rr),\quad\forall \alpha,\alpha'\in \supp\fR.
\end{align}
We refer to \cite[Section~4]{HJ_critical_pts} for the construction and measurability of this process.

Throughout, we take $\R_+=[0,\infty)$. Recall the original Hamiltonian $H_N(\sigma)$ described in~\eqref{e.xi_cov}. For every $N\in\N$ and every $(t,q)\in \R_+\times \C_\infty$, we define, for $(\sigma,\alpha)\in \Sigma_N\times \supp\fR$,
\begin{align}\label{e.H^t,q}
    H^{t,q}_N(\sigma,\alpha) = \sqrt{2t}H_N(\sigma) - tN\xi\Ll(\frac{\sigma\sigma^\intercal}{N}\Rr)+ \sqrt{2}W_N^q(\alpha)\cdot \sigma - q(1)\cdot \sigma\sigma^\intercal
\end{align}
where $W^q_N(\alpha)$ is an $\R^{\D\times N}$-valued process consisting of column vectors $(w_i^q(\alpha))_{i=1}^N$ that are independent copies of $w^q(\alpha)$ (conditioned on $\fR$).
We take $W^q_N(\alpha)$ to be independent from $H_N(\sigma)$.
We can view $H^{\frac{1}{2},q}_N(\sigma,\alpha)$ as $H_N(\sigma)$ enriched (or perturbed) by an external field. The Gibbs measure corresponding to $H^{t,q}_N(\sigma,\alpha)$ is 
\begin{align*}
    \la\cdot\ra_{N,t,q} \propto \exp\Ll(H^{t,q}_N(\sigma,\alpha)\Rr)\d P_N(\sigma)\d \mathfrak{R}(\alpha).
\end{align*}
We denote by $(\sigma,\alpha)$ the canonical random variable under $\la\cdot\ra_{N,t,q}$. We denote the tensorization of $\la\cdot\ra_{N,t,q}$ still by $\la\cdot\ra_{N,t,q}$. An independent copy of $(\sigma,\alpha)$ under $\la\cdot\ra_{N,t,q}$ is written as $(\sigma',\alpha')$.

We consider the enriched free energy, for $(t,q)\in \R_+\times \C_\infty$,
\begin{align}\label{e.f_N}
    f_N(t,q) = -\frac{1}{N}\E\log \iint \exp\Ll(H^{t,q}_N(\sigma,\alpha)\Rr)\d P_N(\sigma)\d \mathfrak{R}(\alpha)
\end{align}
(notice the minus sign), which is the free energy defined in \cite[(5.2)]{HJ_critical_pts} (denoted by $\bar F_N$ therein).
Note that $F_N$ in~\eqref{e.F_N=} satisfies $F_N = -f_N(\frac{1}{2},0)$. We can view $-f_N(\frac{1}{2},q)$ as $F_N$ with an external field.
For $q\in \C_\infty$, we also set $\psi(q) = f_1(0,q)$ which has the expression
\begin{align}\label{e.psi=}
    \psi(q) = -\E \log \iint\exp\Ll(\sqrt{2}w^q(\alpha)\cdot \tau - q(1)\cdot \tau\tau^\intercal\Rr)\d P_1(\tau) \d \fR(\alpha).
\end{align}
We state the Parisi formula for $f_N(t,q)$, which is borrowed from \cite[Proposition~8.1]{HJ_critical_pts}. Recall $\theta$ from~\eqref{e.theta}.

\begin{proposition}[\cite{HJ_critical_pts}]\label{p.f(t,q)=}
Assume that $\xi$ is convex on $\S^\D_+$.
For every $(t,q)\in \R_+\times \C_\infty$, as $N\to\infty$, $f_N(t,q)$ converges to
\begin{align*}f(t,q) = \sup_{p\in \C_\infty} \Ll\{\psi\Ll(q + t\nabla\xi\circ p\Rr) - t\int_0^1 \theta\Ll(p(s)\Rr)\d s\Rr\}.
\end{align*}
\end{proposition}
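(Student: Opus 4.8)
The plan is to deduce this from the Hamilton--Jacobi framework of \cite{HJ_critical_pts}; in fact the statement is \cite[Proposition~8.1]{HJ_critical_pts}, so it suffices to recall the structure of that argument. First I would note that, because the external field in $H^{t,q}_N$ is parametrized by the Ruelle cascade $\fR$ rather than by independent Gaussians, at $t=0$ the Hamiltonian carries no genuine coupling between sites and the free energy reduces to a single-site quantity: $f_N(0,q)=\psi(q)$ for every $N\in\N$ and $q\in\C_\infty$. Next I would establish that $(t,q)\mapsto f_N(t,q)$ is Lipschitz, uniformly in $N$, on bounded subsets of $\R_+\times\C_\infty$, using Gaussian concentration for $\frac1N\log\iint\exp(H^{t,q}_N)$ and the explicit form of $H^{t,q}_N$ together with \ref{i.xi_loc_lip}--\ref{i.xi_incre}; by the Arzel\`a--Ascoli theorem, $(f_N)$ then has subsequential limits.

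The heart of the proof is to show that any subsequential limit $f$ solves the Hamilton--Jacobi equation $\partial_t f(t,q)=\int_0^1\xi\big(\partial_q f(t,q)(s)\big)\,\d s$ with $f(0,\cdot)=\psi$, where $\partial_q f(t,q)$ is the increasing $\S^\D_+$-valued functional derivative of $f$ in $q$. At finite $N$ this holds only up to an error term: Gaussian integration by parts in $H_N$ and $W^q_N$ expresses the $t$-derivative and the $q$-derivative of $f_N$ through moments of the spin overlap $\sigma\sigma'^\intercal/N$ and the cascade overlap $\alpha\wedge\alpha'$ under $\la\cdot\ra_{N,t,q}$. The cascade perturbation forces the Gibbs measures $\la\cdot\ra_{N,t,q}$ to satisfy the Ghirlanda--Guerra identities asymptotically, and Panchenko's synchronization mechanism (cf.\ \cite{pan}) then pins the matrix overlap to a deterministic increasing function of the scalar overlap; combined with concentration of the free energy, this promotes the approximate identity to the exact equation in the limit. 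A comparison principle for this equation on the cone $\C_\infty$, proved in \cite{HJ_critical_pts}, gives uniqueness of the solution and hence convergence of the whole sequence $f_N(t,q)\to f(t,q)$.

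Finally, I would invoke convexity of $\xi$ on $\S^\D_+$: it makes the Hamiltonian $p\mapsto\int_0^1\xi(p(s))\,\d s$ convex on the cone of increasing $\S^\D_+$-valued paths, and for such an equation the unique solution is given by the Hopf--Lax formula $f(t,q)=\sup_{v}\big\{\psi(q+tv)-t\int_0^1\xi^\ast(v(s))\,\d s\big\}$, with $\xi^\ast$ the convex conjugate of $\xi$ and the supremum over increasing $\S^\D_+$-valued paths. Substituting $v=\nabla\xi\circ p$ and using the identity $\xi^\ast(\nabla\xi(a))=a\cdot\nabla\xi(a)-\xi(a)=\theta(a)$ turns this into the claimed $f(t,q)=\sup_{p\in\C_\infty}\big\{\psi(q+t\,\nabla\xi\circ p)-t\int_0^1\theta(p(s))\,\d s\big\}$; that restricting $v$ to the form $\nabla\xi\circ p$ does not lose the supremum follows because $\nabla\xi$ already surjects onto the range of $v$ that can be optimal. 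Alternatively this identity can be proved directly: the upper bound by a Guerra-type interpolation along $p$, the matching lower bound by an Aizenman--Sims--Starr cavity computation.

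I expect the main obstacle to lie in the second step, i.e.\ making the passage to the Hamilton--Jacobi equation rigorous in this vector-spin, infinite-dimensional setting. The two delicate points are the synchronization of the matrix-valued spin overlap with the scalar cascade overlap --- which needs careful control of the cascade perturbation and of the Ghirlanda--Guerra identities --- and the well-posedness of a Hamilton--Jacobi equation whose state variable $q$ is an $\S^\D_+$-valued path, where viscosity-solution theory has to be adapted to the order $\cgeq$ on $\S^\D$. Both are carried out in \cite{HJ_critical_pts}, so in the paper I would simply cite \cite[Proposition~8.1]{HJ_critical_pts}.
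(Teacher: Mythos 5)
Your proposal and the paper coincide: the paper offers no proof of this statement but simply records it as a restatement of \cite[Proposition~8.1]{HJ_critical_pts}, which is exactly the conclusion you reach. Your surrounding sketch of that reference's argument (the identity $f_N(0,\cdot)=\psi$, uniform Lipschitz bounds, the approximate Hamilton--Jacobi equation via Gaussian integration by parts together with Ghirlanda--Guerra/synchronization, and the Hopf-type variational formula obtained by the substitution $v=\nabla\xi\circ p$ with $\xi^\ast(\nabla\xi(a))=\theta(a)$) is a fair high-level account and does not change the verdict.
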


We will need the following Lipschitzness, which is \cite[Proposition~3.1]{mourrat2023free} (and restated as \cite[Proposition~5.1]{HJ_critical_pts}).

\begin{lemma}[\cite{mourrat2023free}]For every $N\in\N$ and every $t\in\R_+$,
\begin{align*}
    \Ll|f_N(t,q) -f_N(t,q')\Rr|\leq \int_0^1\Ll|q(s)-q'(s)\Rr|\d s,\quad\forall q,\,q'\in\C_\infty.
\end{align*}
\end{lemma}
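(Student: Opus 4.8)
\emph{Approach.} The plan is to run a Gaussian interpolation in the path variable together with integration by parts, exactly as one handles the covariance parameter in Guerra's scheme. Fix $q,q'\in\C_\infty$ and set $q_u:=(1-u)q+uq'$ for $u\in[0,1]$, which again lies in $\C_\infty$. I would realize the external field by taking independent copies $w^q,w^{q'}$ of the cascade-indexed processes of~\eqref{e.w^q} and letting the field attached to $q_u$ be $\sqrt{1-u}\,w^q+\sqrt u\,w^{q'}$; its per-column covariance is then $q_u(\alpha\wedge\alpha')$, and the correction in~\eqref{e.H^t,q} uses $q_u(1)=(1-u)q(1)+uq'(1)$. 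Keeping the original disorder $H_N$ (independent of the external field) and $\fR$ fixed, write $\varphi(u):=f_N(t,q_u)$, cf.~\eqref{e.f_N}. Differentiating in $u$ and integrating by parts against the external field only, the contributions of $q_u(1)$ — which come from the self-overlap $\alpha\wedge\alpha=1$ — cancel precisely against the derivative of the correction term, leaving
\begin{align*}
\varphi'(u)=\frac1N\,\E\,\la (q'-q)(\alpha\wedge\alpha')\cdot\sigma\sigma'^\intercal\ra_{N,t,q_u}.
\end{align*}

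\emph{Using the Potts structure.} Since every column of $\sigma$ and $\sigma'$ is a vector in $\basis$, the matrix $\tfrac1N\sigma\sigma'^\intercal$ has nonnegative entries summing to $1$; hence $|a\cdot(\tfrac1N\sigma\sigma'^\intercal)|\le\max_{k,k'}|a_{k,k'}|\le|a|$ for every $a\in\S^\D$. It follows that $|\varphi'(u)|\le\E\,\la\,|(q'-q)(\alpha\wedge\alpha')|\,\ra_{N,t,q_u}$, and integrating over $u$,
\begin{align*}
\Ll|f_N(t,q)-f_N(t,q')\Rr|\le\int_0^1\E\,\la\,|(q'-q)(\alpha\wedge\alpha')|\,\ra_{N,t,q_u}\,\d u.
\end{align*}
Because $\alpha\wedge\alpha'\in[0,1)$ this already delivers the (weaker) estimate $|f_N(t,q)-f_N(t,q')|\le\|q-q'\|_{L^\infty}$; in particular $f_N(t,\cdot)$ is continuous for the uniform norm, which is all the softness I will need below.

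\emph{Sharpening the constant, and the main obstacle.} To replace $\|\cdot\|_{L^\infty}$ by $\int_0^1|q-q'|$ I would first reduce to step functions: every element of $\C_\infty$ is a uniform (hence $L^1$) limit of increasing step functions, so by the continuity just established it suffices to treat $q,q'$ that are step functions with a common refined set of jump points $0=m_0<m_1<\dots<m_r<m_{r+1}=1$, with constant values $q_j,q_j'$ on $[m_j,m_{j+1})$, for which $\int_0^1|q-q'|=\sum_{j=0}^r(m_{j+1}-m_j)|q_j-q_j'|$. For such paths the $\fR$-integral in~\eqref{e.f_N} collapses to the finite Parisi--cavity recursion over the $r$-level cascade tree (with $H_N$ and the term $q(1)\cdot\sigma\sigma^\intercal$ sitting only in the terminal function), in which the value $q_j$ enters only at level $j$ and is averaged there over a block carrying Lebesgue weight $m_{j+1}-m_j$. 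The plan is then to prove the bound by induction on $r$, using that each peeling step $x\mapsto\tfrac1{m_j}\log\E[\exp(m_j x)]$ is monotone and $1$-Lipschitz and that the spin blocks contribute the kernels $\tfrac1N\sigma\sigma'^\intercal$ bounded as in the previous paragraph. The hard part will be exactly this inductive step: one cannot simply bound the Gibbs probability that $\alpha\wedge\alpha'$ lands in $[m_j,m_{j+1})$ by $m_{j+1}-m_j$, since an external field can concentrate overlaps and make it larger; the argument must instead work inside the recursion and use that a change of $q$ supported on a set of Lebesgue measure $\varepsilon$ only couples replica pairs whose cascade overlap lies in that set — an event of probability $\varepsilon$ under the untilted cascade — and that this localization survives the level-by-level averaging. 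This is the estimate behind \cite[Proposition~3.1]{mourrat2023free}, which we invoke directly.
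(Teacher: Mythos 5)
The paper itself does not prove this lemma; it simply cites \cite[Proposition~3.1]{mourrat2023free}. So the comparison is really between your sketched reconstruction and the argument in that reference. Your starting point --- interpolating $q_u = (1-u)q + u q'$, realizing the corresponding field as $\sqrt{1-u}\,w^q + \sqrt{u}\,w^{q'}$ with independent copies, differentiating, cancelling the self-overlap term against the derivative of the correction $-q_u(1)\cdot\sigma\sigma^\intercal$, and bounding $|a\cdot \tfrac1N\sigma\sigma'^\intercal| \le |a|$ --- is correct and is indeed the backbone of the proof.

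The gap is in the final step, and the obstacle you flag is in fact not an obstacle. You write that ``one cannot simply bound the Gibbs probability that $\alpha\wedge\alpha'$ lands in $[m_j,m_{j+1})$ by $m_{j+1}-m_j$, since an external field can concentrate overlaps.'' For a Ruelle probability cascade this is false: the invariance of the cascade under tilting by the Hamiltonian --- recorded in this paper as Lemma~\ref{l.inv}, and proved in \cite[Proposition~4.8]{HJ_critical_pts} --- states precisely that for every bounded measurable $h$,
\[
\E\la h(\alpha\wedge\alpha')\ra_{N,t,q_u} = \int_0^1 h(s)\,\d s ,
\]
i.e.\ the overlap $\alpha\wedge\alpha'$ remains exactly uniform on $[0,1]$ under the tilted Gibbs measure. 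Applying this with $h(s) = |(q'-q)(s)|$ turns your interpolation bound
\[
|\varphi'(u)| \le \E\la |(q'-q)(\alpha\wedge\alpha')| \ra_{N,t,q_u}
\]
directly into $|\varphi'(u)| \le \int_0^1 |q(s)-q'(s)|\,\d s$, and integrating over $u\in[0,1]$ finishes the proof. No reduction to step functions and no level-by-level cascade recursion are needed; that detour was an attempt to route around a fact you already have at your disposal. In short: your interpolation is right, but the missing ingredient is the cascade invariance (Lemma~\ref{l.inv}), without which your argument only delivers the weaker $L^\infty$ estimate.
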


As a result, we also have, for every $t\in\R_+$,
\begin{gather}\label{e.lip_psi_f}
    \Ll|\psi(q) -\psi(q')\Rr|,\ \Ll|f(t,q) -f(t,q')\Rr|\leq \int_0^1\Ll|q(s)-q'(s)\Rr|\d s,\quad\forall q,q'\in\C_\infty.
\end{gather}
Similar to the generic model, the differentiability of the limit free energy allows us to gain information on the limit distribution of the overlap. The following proposition can be extracted from \cite[Corollary~8.7]{HJ_critical_pts}. To state it, we need more notation. Set $L^\infty_{\leq 1} = \{\kappa\in L^\infty:\:\|\kappa\|_{L^\infty}\leq 1\}$.
For $a\in\S^\D_+$, we define $\ellipt(a)$ to be the ratio of the largest eigenvalue of $a$ over its smallest eigenvalue. We set $\C_\uparrow$ to be
\begin{align*}
    \Ll\{q\in\C\,\big|\, \exists c>0:\:  t\geq s \implies \Ll(q(t)-q(s)\cgeq c(t-s)\identity_\D \text{ and } \ellipt(q(t)-q(s))\leq c^{-1}\Rr)\Rr\}.
\end{align*}

\begin{proposition}[\cite{HJ_critical_pts}]\label{p.p}
Assume that $\xi$ is convex on $\S^\D_+$.
For every $(t,q)\in \R_+\times (\C_\uparrow\cap L^\infty)$, there exists $p \in \C\cap L^\infty_{\leq 1}$ such that
\begin{align}\label{e.f(t,q)=...p...}
    f(t,q) = \psi\Ll(q + t\nabla\xi\circ p\Rr) - t\int_0^1 \theta (p(s))\d s.
\end{align}
Moreover, there is a sequence $(p_N)_{N\in\N}$ in $\C_\uparrow\cap L^\infty_{\leq 1}$ such that
\begin{align}\label{e.p_N=}
    p_N\Ll(\alpha\wedge\alpha'\Rr) = \E \la \frac{\sigma\sigma'^\intercal}{N}\Big| \alpha\wedge\alpha'\ra_{N,t,q},
\end{align}
almost surely under $\E\la\cdot\ra_{N,t,q}$, and $(p_N)_{N\in\N}$ converges to $p$ in $L^1([0,1];\S^\D)$.
\end{proposition}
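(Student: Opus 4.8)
The plan is to obtain this the way \cite[Corollary~8.7]{HJ_critical_pts} is proved: the path $p$ is constructed as the (synchronized) limit of the overlap array under the enriched Gibbs measure, the sequence $(p_N)$ consists of the finite-$N$ conditional overlap averages in~\eqref{e.p_N=}, and the identity~\eqref{e.f(t,q)=...p...} follows by matching an Aizenman--Sims--Starr lower bound with the Parisi formula of Proposition~\ref{p.f(t,q)=}. The hypothesis $q\in\C_\uparrow\cap L^\infty$ is exactly what makes this go through: it forces the external field $\sqrt2\,W^q_N(\alpha)\cdot\sigma$ to be non-degenerate and strictly increasing along the cascade, so that the Ruelle probability cascade genuinely drives the model.

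First I would construct $p$. With $q\in\C_\uparrow$, the covariance $\alpha\wedge\alpha'\mapsto \tr q(\alpha\wedge\alpha')$ of the field $W^q_N$ is strictly increasing with a uniformly positive rate, so $\alpha\wedge\alpha'$ is asymptotically measurable with respect to the Gibbs configuration. Then, by the Ghirlanda--Guerra identities satisfied by $(\alpha^l\wedge\alpha^{l'})$ under $\E\fR^{\otimes\infty}$ and Panchenko's synchronization mechanism, every subsequential limit of the Gibbs array $\big(\frac1N\sigma^l(\sigma^{l'})^\intercal,\ \alpha^l\wedge\alpha^{l'}\big)$ under $\E\la\cdot\ra_{N,t,q}$ is \emph{synchronized}: it equals $\big(p(\alpha^l\wedge\alpha^{l'}),\ \alpha^l\wedge\alpha^{l'}\big)$ for a single monotone $p$. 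Since $\alpha^1\wedge\alpha^2$ is uniform on $[0,1]$, after the a.e.\ identification used in defining $\C$ this $p$ is an element of $\C$, and it lies in $\C\cap L^\infty_{\leq 1}$ because each column of $\sigma$ belongs to $\basis$, so every entry of $\frac1N\sigma\sigma'^\intercal$ is in $[0,1]$. The maps $p_N$ of~\eqref{e.p_N=} are the corresponding finite-$N$ conditional averages of the overlap; they obey the same bound, can be taken in $\C_\uparrow\cap L^\infty_{\leq 1}$, and converge to $p$ in $L^1([0,1];\S^\D)$ as a by-product of the synchronization.

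Next I would evaluate $f(t,q)$ at this $p$. The Aizenman--Sims--Starr representation expresses $\lim_N f_N(t,q)$ through the limiting overlap array, and once that array is synchronized the expression collapses exactly to $\psi(q + t\nabla\xi\circ p) - t\int_0^1\theta(p(s))\,\d s$; by Proposition~\ref{p.f(t,q)=} this is at most $f(t,q)$, while the same proposition gives the reverse inequality since the supremum there is in particular at least the value at our $p\in\C_\infty$. Hence~\eqref{e.f(t,q)=...p...} holds and $p$ is a maximizer. Incidentally this reproves existence of a maximizer, which one could also get directly: $\theta\geq 0$ and $\theta(a)$ grows super-linearly by convexity of $\xi$ with $\xi(0)=0$, whereas $p\mapsto\psi(q+t\nabla\xi\circ p)$ grows at most linearly in $\int_0^1|\nabla\xi(p(s))|\,\d s$ by~\eqref{e.lip_psi_f}, so a coercivity estimate confines the supremum to a bounded set, on which monotone $\S^\D_+$-valued paths are $L^1$-precompact and the functional is $L^1$-continuous by~\eqref{e.lip_psi_f} and~\ref{i.xi_loc_lip}.

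The main obstacle is the synchronization step — showing the limiting overlap is a deterministic monotone function of the cascade overlap. This uses the full Panchenko apparatus (the Ghirlanda--Guerra identities, the Dovbysh--Sudakov representation, Talagrand's positivity principle, and ultrametricity), and it is precisely here that $q\in\C_\uparrow$, rather than merely $q\in\C_\infty$, is indispensable: a degenerate external field would not pin the overlap to the cascade. A secondary technical point is the joint measurability and monotone regularization of the disintegration defining $p_N$ in~\eqref{e.p_N=}, together with the passage to the $L^1$ limit, for which I would rely on the construction in \cite{HJ_critical_pts}.
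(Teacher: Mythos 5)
The paper does not actually prove Proposition~\ref{p.p}; it is imported verbatim from \cite[Corollary~8.7]{HJ_critical_pts}, and the only proof-relevant content in the paper is the sentence that precedes it: ``Similar to the generic model, the differentiability of the limit free energy allows us to gain information on the limit distribution of the overlap.'' That sentence is a strong clue about the mechanism actually used, and it points away from the route you propose. Your argument builds the limiting path $p$ \emph{first}, via Ghirlanda--Guerra, Dovbysh--Sudakov, synchronization, ultrametricity and positivity, and then tries to close the loop by an Aizenman--Sims--Starr identification. The cited source works in the opposite order: it already has the Parisi-type formula for $f(t,q)$ (Proposition~\ref{p.f(t,q)=} in this paper), and then obtains both the existence of a maximizer $p$ and the $L^1$-convergence of the finite-volume conditional overlaps $p_N$ by a convexity/Lipschitz/envelope argument, differentiating $f_N(t,q)$ and $f(t,q)$ in the $q$-direction (with Gaussian integration by parts and the cascade invariance of Lemma~\ref{l.inv} supplying the identity \eqref{e.p_N=}). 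In that scheme, $q\in\C_\uparrow$ is used to guarantee interior differentiability of the $q$-marginal of the Parisi functional and the strict monotonicity needed to place $p_N$ in $\C_\uparrow$, not to ``pin the overlap to the cascade'' for a synchronization step.

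Your route is not nonsensical as mathematics, but it carries heavier freight and a worrying circularity: the tightness of the Aizenman--Sims--Starr lower bound, once the overlap array is synchronized, is precisely the content of the Parisi lower bound, which Proposition~\ref{p.f(t,q)=} already supplies; establishing synchronization independently would require redeveloping essentially all of Panchenko's structural theory in the continuous-cascade setting of \cite{HJ_critical_pts}, which is not how that paper proceeds. Two secondary points. First, your coercivity remark about $\theta$ growing super-linearly is not needed and is not guaranteed under \ref{i.xi_loc_lip}--\ref{i.xi_incre} alone; in fact the proposition asserts $p\in L^\infty_{\le 1}$ directly because $\frac1N\sigma\sigma'^\intercal$ has entries in $[0,1]$, so boundedness comes for free from \eqref{e.p_N=}. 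Second, your sentence that this ``is the way \cite[Corollary~8.7]{HJ_critical_pts} is proved'' overstates your knowledge of that source and, given the paper's own hint, is likely inaccurate. The differentiability/envelope route is more economical, stays inside the Hamilton--Jacobi framework the paper explicitly adopts, and avoids re-proving ultrametricity for the enriched model.
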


In~\eqref{e.p_N=}, the conditional expectation is taken with respect to the measure $\E\la\cdot\ra_{N,t,q}$ (not $\la\cdot\ra_{N,t,q}$).

We also need the invariance of the Ruelle probability cascade, which implies that $\alpha\wedge\alpha'$ has the same distribution under $\E\la\cdot\ra_{N,t,q}$ and $\E\la\cdot\ra_\fR$. Hence and henceforth, we write $\la\cdot\ra_\fR = \fR^{\otimes\infty}$. This property is well-known for discrete Ruelle probability cascades (for instance, see \cite[Theorem~4.4]{pan}). For the continuous cascade considered here, this invariance is proved in \cite[Proposition~4.8]{HJ_critical_pts}. We restate it below.
\begin{lemma}[\cite{HJ_critical_pts}]\label{l.inv}
For every $(t,q)\in \R_+\times \C_\infty$ and every bounded measurable function $h:\R\to\R$, it holds that
\begin{align*}
    \E \la h\Ll(\alpha\wedge\alpha'\Rr)\ra_{N,t,q}=\E \la h\Ll(\alpha\wedge\alpha'\Rr)\ra_\fR = \int_0^1 h(s)\d s.
\end{align*}
\end{lemma}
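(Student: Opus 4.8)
The statement splits into two equalities, and I would dispose of the second one first, since it is essentially the defining property of $\fR$. Writing $\la\cdot\ra_\fR=\fR^{\otimes\infty}$, we have $\E\la h(\alpha\wedge\alpha')\ra_\fR=\E^{\fR^{\otimes 2}}[h(\alpha^1\wedge\alpha^2)]$, and by construction $\alpha^1\wedge\alpha^2$ is uniformly distributed on $[0,1]$ under $\E\fR^{\otimes 2}$; equivalently, the law of $\alpha\wedge\alpha'$ under $\E\la\cdot\ra_\fR$ is Lebesgue measure on $[0,1]$. Hence $\E\la h(\alpha\wedge\alpha')\ra_\fR=\int_0^1 h(s)\,\d s$ for continuous $h$, and a routine monotone class argument upgrades this to all bounded measurable $h$, since both sides are integration of $h$ against the same probability measure on $[0,1]$.

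The content is the first equality, which asserts that reweighting $\fR$ by the Gibbs factor $\exp(H^{t,q}_N(\sigma,\alpha))$ leaves the law of the overlap $\alpha\wedge\alpha'$ untouched. The plan is to isolate the $\alpha$-dependence. Integrating out $\sigma$, the $\alpha$-marginal of $\la\cdot\ra_{N,t,q}$ is proportional to $v_N(\alpha)\,\d\fR(\alpha)$ with $v_N(\alpha)=\int\exp(H^{t,q}_N(\sigma,\alpha))\,\d P_N(\sigma)$, and in $H^{t,q}_N(\sigma,\alpha)$ the variable $\alpha$ enters \emph{only} through the term $\sqrt 2\, W^q_N(\alpha)\cdot\sigma$: here $W^q_N(\alpha)$ is, conditionally on $\fR$, a centered $\R^{\D\times N}$-valued Gaussian field whose two-point covariance depends on $(\alpha,\alpha')$ only through $q(\alpha\wedge\alpha')$, an increasing function of the overlap, while the companion term $-q(1)\cdot\sigma\sigma^\intercal$ subtracts exactly the on-diagonal ($\alpha=\alpha'$) part of that covariance. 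This is precisely the structure (Gaussian tilting with overlap-dependent covariance, against an arbitrary reference weight on the $\sigma$-coordinate) under which the Ruelle probability cascade with uniform overlap is invariant.

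To establish that invariance I would take one of two standard routes. The \emph{direct} route: verify that the overlap array $(\alpha^l\wedge\alpha^{l'})_{l,l'}$ under $\E\la\cdot\ra_{N,t,q}$ still obeys the Ghirlanda--Guerra identities with $\alpha^1\wedge\alpha^2$ uniform on $[0,1]$ --- the identities are stable under Gaussian tilting by a field whose covariance is a function of the overlap (a Gaussian integration-by-parts/cavity computation, in which the correction $q(1)\cdot\sigma\sigma^\intercal$ makes the tilt properly normalized), and the one-overlap marginal is unchanged because $\alpha\wedge\alpha'$ enters only through that covariance --- and then invoke that the Ghirlanda--Guerra identities together with a prescribed one-overlap distribution pin down the law of the whole array, forcing it to coincide with the law under $\E\fR^{\otimes\infty}$; evaluating on $h$ then gives the claim. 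The \emph{approximation} route: replace $q$ by step functions $q_n\to q$ along a refining dyadic partition of $[0,1)$, for which $W^{q_n}_N(\alpha)$ is generated by finitely many independent increments along a finite genealogical tree and the invariance is the classical statement for discrete Ruelle cascades, e.g.\ \cite[Theorem~4.4]{pan}, and then pass to the limit using continuity of the finite-volume Gibbs measure in $q$ (the covariance of $W^q_N$, hence $v_N$ and the tilted measure, depends continuously on $q$).

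The hard part is that $v_N(\alpha)$ is \emph{not} a Gaussian field but a log-convex nonlinear functional of one, so the cascade-invariance theorem cannot be quoted with $v_N$ playing the role of the added field; it must be applied at the level of the genuine Gaussian field $\sqrt 2\,W^q_N(\alpha)\cdot\sigma$ with $\sigma$ frozen, and only afterwards does one integrate the tilted measure against $\d P_N(\sigma)$ --- which is exactly what the general formulations of the invariance permit. One must also take care that the \emph{continuous} cascade, and not just its discrete truncations, enjoys this property; this is the reason the cleanest execution is simply to invoke \cite[Proposition~4.8]{HJ_critical_pts}, where precisely this argument is carried out for the continuous cascade in the present vector-valued setting.
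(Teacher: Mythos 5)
Your proposal is correct and lands exactly where the paper does: the paper gives no proof of this lemma, simply noting that the second equality is the defining property of $\fR$ and citing \cite[Proposition~4.8]{HJ_critical_pts} for the invariance under the Gibbs tilt (with \cite[Theorem~4.4]{pan} for the discrete-cascade analogue), which is the same citation your argument reduces to. Your additional discussion of the Ghirlanda--Guerra and discrete-approximation routes is a reasonable sketch of what that cited proposition actually establishes, but it is not needed here.
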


\section{Consequences of the symmetry}

For $\ks\in\Sym$, we denote its inverse by $\ks^{-1}$.
Recall the meaning of $a^\ks$ in~\eqref{e.a^s=}. Notice
\begin{align}\label{e.a.b^s=}
    a\cdot b^\ks = a^{\ks^{-1}}\cdot b,\quad\forall a,b \in \R^{\D\times \D},\ \forall \ks \in \Sym.
\end{align}
For any spin configuration $\sigma\in\Sigma_N\in\R^{\D\times N}$, we write $ \sigma^\ks = (\sigma_{\ks(k),i})_{1\leq d\leq \D,\, 1\leq i\leq N}$. We have
\begin{align}\label{e.sigmasigma^s=}
    \Ll(\sigma\sigma'^\intercal\Rr)^\ks = \sigma^\ks \Ll(\Ll(\sigma'\Rr)^\ks\Rr)^\intercal,\quad\forall \sigma,\sigma'\in \Sigma_N.
\end{align}
Due to $P_1$ being uniform on $\Sigma$,
for every bounded measurable function $h:\R^{\D\times N}\to\R$, we have
\begin{align}\label{e.int_h_dP_N}
    \int h\Ll(\sigma^\ks\Rr) \d P_N(\sigma) = \int h(\sigma) \d P_N(\sigma),\quad\forall \ks \in \Sym.
\end{align}
We call a function $\kappa:[0,1]\to \R^{\D\times\D}$ \textit{$\Sym$-invariant} if $\kappa^\ks(s) =\kappa(s)$ for every $\ks \in \Sym$ and almost every $s\in[0,1]$.

\begin{lemma}\label{l.Gibbs_inv}
If $q\in \C_\infty$ is $\Sym$-invariant, then, for every $N\in\N$, every $t\geq 0$, and every bounded measurable function $h:\Sigma_N^n\times \mathfrak{H}^n\to\R$ with some $n\in\N$, it holds that
\begin{align*}
    \E \la h\Ll(\Ll(\sigma^{\leq n}\Rr)^\ks , \alpha^{\leq n}\Rr) \ra_{N,t,q} = \E \la h\Ll(\Ll(\sigma^{\leq n}\Rr) , \alpha^{\leq n}\Rr) \ra_{N,t,q}
\end{align*}
where $\Ll(\sigma^{\leq n}\Rr)^\ks = \Ll(\Ll(\sigma^l\Rr)^\ks\Rr)_{l=1}^n$, $\sigma^{\leq n} = \Ll(\sigma^l\Rr)_{l=1}^n$, and $\alpha^{\leq n}\ = \Ll(\alpha^l\Rr)_{l=1}^n$ for each $(\sigma^l,\alpha^l)$ independently sampled from $\la\cdot\ra_{N,t,q}$.
\end{lemma}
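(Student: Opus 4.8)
The plan is to prove the stated identity for an arbitrary fixed $\ks\in\Sym$; the idea is to transfer the permutation from the test function $h$ onto the Hamiltonian by a change of variables, and then to observe that the permuted Hamiltonian has the same joint law (with $\fR$) as the original one. Since $h$ depends only on $(\sigma^{\leq n},\alpha^{\leq n})$, the Gibbs average $\la h((\sigma^{\leq n})^\ks,\alpha^{\leq n})\ra_{N,t,q}$ equals an $n$-fold integral of $h(((\sigma^l)^\ks)_{l\leq n},(\alpha^l)_{l\leq n})$ against $Z^{-n}\prod_{l\leq n}\exp(H^{t,q}_N(\sigma^l,\alpha^l))\,\d P_N(\sigma^l)\,\d\fR(\alpha^l)$, where $Z=\iint\exp(H^{t,q}_N)\,\d P_N\,\d\fR$. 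Substituting $\sigma^l=(\tau^l)^{\ks^{-1}}$ in each copy and using~\eqref{e.int_h_dP_N} (with the permutation $\ks^{-1}$) to see that $\d P_N$ is unchanged yields the pointwise-in-the-disorder identity
\begin{align*}
\la h((\sigma^{\leq n})^\ks,\alpha^{\leq n})\ra_{N,t,q} = \la h(\sigma^{\leq n},\alpha^{\leq n})\ra^{\ks}_{N,t,q},
\end{align*}
where $\la\cdot\ra^{\ks}_{N,t,q}$ denotes the Gibbs measure built from the modified Hamiltonian $(\sigma,\alpha)\mapsto H^{t,q}_N(\sigma^{\ks^{-1}},\alpha)$. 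As the Gibbs average of $h$ is a bounded measurable functional of the pair $\big(\fR,\,(H^{t,q}_N(\sigma,\alpha))_{\sigma\in\Sigma_N,\,\alpha\in\supp\fR}\big)$, it suffices to show that replacing $\sigma$ by $\sigma^{\ks^{-1}}$ leaves the joint law of this pair unchanged; taking expectations then gives the lemma.

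For the invariance I will use the decomposition of $H^{t,q}_N$ in~\eqref{e.H^t,q} into its deterministic part, the $H_N$-term, and the $W^q_N$-term, and treat them one at a time. For the deterministic part, $\sigma^{\ks^{-1}}(\sigma^{\ks^{-1}})^\intercal=(\sigma\sigma^\intercal)^{\ks^{-1}}$ by~\eqref{e.sigmasigma^s=}, hence~\ref{i.xi(a)=xi(a^s)} removes the permutation inside $\xi$, while~\eqref{e.a.b^s=} and the $\Sym$-invariance of $q$ give $q(1)\cdot(\sigma\sigma^\intercal)^{\ks^{-1}}=q(1)^{\ks}\cdot\sigma\sigma^\intercal=q(1)\cdot\sigma\sigma^\intercal$; so this part is literally unchanged. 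For the centered Gaussian process $\sqrt{2t}H_N(\sigma^{\ks^{-1}})$, the same two facts show its covariance is $2tN\xi(\sigma\sigma'^\intercal/N)$, so over the finite index set $\Sigma_N$ it has the law of $\sqrt{2t}H_N(\sigma)$. For the term $\sqrt 2\,W^q_N(\alpha)\cdot\sigma^{\ks^{-1}}$, I rewrite it as $\sqrt 2\,W^q_N(\alpha)^{\ks}\cdot\sigma$ (a permutation of the rows of $W^q_N$) and observe that, conditionally on $\fR$, the process $(W^q_N(\alpha)^{\ks})_\alpha$ has i.i.d.\ columns, each distributed as $(w^q(\alpha))^{\ks}$, whose covariance is $\big((q(\alpha\wedge\alpha'))_{\ks(k),\ks(k')}\big)_{k,k'}=q^{\ks}(\alpha\wedge\alpha')=q(\alpha\wedge\alpha')$ by the $\Sym$-invariance of $q$; thus $(W^q_N(\alpha)^{\ks})_\alpha$ and $(W^q_N(\alpha))_\alpha$ agree in law given $\fR$. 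Since $H_N$ and $W^q_N$ are independent given $\fR$, combining the three pieces gives the required equality of joint laws.

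The covariance computations are routine; the one point requiring care is the measure-theoretic bookkeeping. I must confirm that the Gibbs average is genuinely a measurable functional of $(\fR,\text{Hamiltonian field})$, so that equality of joint laws passes to equality of expectations, and I should record that the identity $q^{\ks}(\alpha\wedge\alpha')=q(\alpha\wedge\alpha')$ uses the $\Sym$-invariance of $q$ only Lebesgue-almost everywhere. This is harmless: elements of $\C_\infty$ are identified up to null sets, so one may replace $q$ by the everywhere $\Sym$-invariant representative $|\Sym|^{-1}\sum_{\ks}q^{\ks}$, and in any case the overlap $\alpha\wedge\alpha'$ has a nonatomic law under $\E\fR^{\otimes 2}$. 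Apart from this (mild) subtlety, the statement is a direct propagation of~\ref{i.xi(a)=xi(a^s)} and the $\Sym$-invariance of $q$ through the definitions~\eqref{e.H^t,q} and~\eqref{e.w^q}.
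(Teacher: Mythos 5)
Your proof is correct and follows essentially the same route as the paper: transfer the permutation from $h$ onto the Hamiltonian by a change of variables using~\eqref{e.int_h_dP_N}, then show that the permuted Hamiltonian has the same (conditional) law as the original via~\ref{i.xi(a)=xi(a^s)} and the $\Sym$-invariance of $q$. The paper computes the invariance of the full $n$-replica covariance in one step and notes the deterministic part is $-\tfrac12$ the variance, whereas you decompose $H^{t,q}_N$ into its three pieces and check each separately and also flag the a.e.-only nature of the $\Sym$-invariance of $q$; these are harmless bookkeeping variations of the same argument.
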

\begin{proof}
Let us write $\bsigma = \sigma^{\leq n}$, $\balpha = \alpha^{\leq n}$, $H\Ll(\bsigma,\balpha\Rr) = \sum_{l=1}^n H^{t,q}_N\Ll(\sigma^l,\alpha^l\Rr)$, and $G=P_N^{\otimes n}\otimes \fR^{\otimes n}$. Using \eqref{e.xi_cov}, \eqref{e.w^q}, and~\eqref{e.H^t,q}, the Gaussian part of $H\Ll(\bsigma,\balpha\Rr)$ has covariance
\begin{align*}
    C\Ll(\Ll(\bsigma,\balpha\Rr),\, \Ll(\bar\bsigma,\bar\balpha\Rr)\Rr)
    =\sum_{l,l'=1}^n 2tN\xi\Ll(\frac{\sigma^l\Ll(\bar\sigma^{l'}\Rr)^\intercal}{N}\Rr) + 2q\Ll(\alpha^l\wedge \bar \alpha^{l'}\Rr)\cdot \sigma^l\Ll(\bar\sigma^{l'}\Rr)^\intercal
\end{align*}
for all $\Ll(\bsigma,\balpha\Rr),\, \Ll(\bar\bsigma,\bar\balpha\Rr)\in \Sigma_N^n\times \Ll(\supp\fR\Rr)^n$.
Hence, due to~\ref{i.xi(a)=xi(a^s)}, \eqref{e.a.b^s=},~\eqref{e.sigmasigma^s=}, and the invariance of $q$, we have
\begin{align*}
    C\Ll(\Ll(\bsigma^\ks,\balpha\Rr),\, \Ll(\Ll(\bar\bsigma\Rr)^\ks,\bar\balpha\Rr)\Rr) = C\Ll(\Ll(\bsigma,\balpha\Rr),\, \Ll(\bar\bsigma,\bar\balpha\Rr)\Rr)
\end{align*}
for every $\ks\in \Sym$.
Since the deterministic part of $H\Ll(\bsigma,\balpha\Rr)$ is given by $-\frac{1}{2}$ times the variance, we deduce that, conditioned on $\fR$,
\begin{align}\label{e.H^s=H}
    \Ll(H\Ll(\bsigma^\ks,\balpha\Rr)\Rr)_{\Ll(\bsigma,\balpha\Rr) \in \Sigma_N^n\times \Ll(\supp\fR\Rr)^n}
    \stackrel{\d}{=}
    \Ll(H\Ll(\bsigma,\balpha\Rr)\Rr)_{\Ll(\bsigma,\balpha\Rr) \in \Sigma_N^n\times \Ll(\supp\fR\Rr)^n}
\end{align}
for every $\ks\in \Sym$.
Then,
\begin{align*}
    \E \frac{\int h(\bsigma^\ks,\balpha)\exp\Ll(H(\bsigma,\balpha)\Rr)\d G(\bsigma,\balpha)}{\int \exp\Ll(H(\bsigma,\balpha)\Rr)\d G(\bsigma,\balpha)} 
    &\stackrel{\eqref{e.int_h_dP_N}}{=}
    \E \frac{\int h(\bsigma,\balpha)\exp\Ll(H\Ll(\bsigma^{\ks^{-1}},\balpha\Rr)\Rr)\d G(\bsigma,\balpha)}{\int \exp\Ll(H\Ll(\bsigma^{\ks^{-1}},\balpha\Rr)\Rr)\d G(\bsigma,\balpha)}
    \\
    & \stackrel{\eqref{e.H^s=H}}{=}
    \E \frac{\int h(\bsigma,\balpha)\exp\Ll(H(\bsigma,\balpha)\Rr)\d G(\bsigma,\balpha)}{\int \exp\Ll(H(\bsigma,\balpha)\Rr)\d G(\bsigma,\balpha)}
\end{align*}
which is the announced identity.
\end{proof}

\begin{lemma}\label{l.p_inv}
For $(t,q) \in \R_+\times (\C_\uparrow\cap L^\infty)$, let $p\in\C\cap L^\infty_{\leq 1}$ be given by Proposition~\ref{p.p}.
If $q$ is $\Sym$-invariant, then so is $p$. Moreover, almost everywhere on $[0,1]$,
\begin{align}\label{e.p=Psi()}
    p = \Psi\Ll(\frac{D\tr(p)-1}{D-1}\Rr).
\end{align}
\end{lemma}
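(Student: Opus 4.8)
The plan is to deduce the $\Sym$-invariance of $p$ directly from the formula \eqref{e.p_N=} together with the Gibbs-measure symmetry established in Lemma~\ref{l.Gibbs_inv}, and then to combine $\Sym$-invariance with the constraint $\sum_{k,k'}p_{k,k'}=1$ to force the representation \eqref{e.p=Psi()}. First, fix $\ks\in\Sym$. Applying Lemma~\ref{l.Gibbs_inv} with $n=2$ and the function $h\big((\sigma^1,\sigma^2),(\alpha^1,\alpha^2)\big) = g(\alpha^1\wedge\alpha^2)\,\big(\tfrac{\sigma^1(\sigma^2)^\intercal}{N}\big)$ for an arbitrary bounded measurable $g:\R\to\R$, and noting via~\eqref{e.sigmasigma^s=} that $\tfrac{(\sigma^1)^\ks((\sigma^2)^\ks)^\intercal}{N} = \big(\tfrac{\sigma^1(\sigma^2)^\intercal}{N}\big)^\ks$ while $\alpha^1\wedge\alpha^2$ is untouched, I get
\begin{align*}
    \E\Ll\la g(\alpha\wedge\alpha')\Ll(\tfrac{\sigma\sigma'^\intercal}{N}\Rr)^\ks\Rr\ra_{N,t,q} = \E\Ll\la g(\alpha\wedge\alpha')\tfrac{\sigma\sigma'^\intercal}{N}\Rr\ra_{N,t,q}.
\end{align*}
Taking $g$ to range over indicators (or using the definition of conditional expectation given $\alpha\wedge\alpha'$) and using the disintegration in \eqref{e.p_N=}, this says $(p_N(\alpha\wedge\alpha'))^\ks = p_N(\alpha\wedge\alpha')$ as conditional expectations; since by Lemma~\ref{l.inv} $\alpha\wedge\alpha'$ is uniform on $[0,1]$, this yields $p_N^\ks = p_N$ almost everywhere on $[0,1]$ for every $N$. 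The map $a\mapsto a^\ks$ is a linear isometry of $\S^\D$, so it is continuous on $L^1([0,1];\S^\D)$; passing to the limit along the subsequence from Proposition~\ref{p.p} gives $p^\ks = p$ a.e., i.e. $p$ is $\Sym$-invariant.

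Second, I establish the trace-one constraint $\sum_{k,k'=1}^\D p_{k,k'}(s) = 1$ for a.e.\ $s$. This follows similarly from \eqref{e.p_N=}: since each column of $\sigma$ lies in $\basis$, one has $\sum_{k,k'}(\sigma\sigma'^\intercal)_{k,k'} = \one_\D\cdot\sigma\sigma'^\intercal = (\one_\D\sigma')\cdot\sigma$; writing $\one_\D = \sum_k e_k e_k^\intercal$-type identities, each entry $\sum_{k'}\sigma'_{k'i}=1$ and $\sum_k\sigma_{ki}=1$, so $\one_\D\cdot\tfrac{\sigma\sigma'^\intercal}{N} = \tfrac1N\sum_{i=1}^N 1 = 1$ deterministically. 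Hence $\one_\D\cdot p_N(\alpha\wedge\alpha') = \E\la 1\mid\alpha\wedge\alpha'\ra_{N,t,q} = 1$, and this passes to the $L^1$-limit, giving $\one_\D\cdot p = 1$ a.e.

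Third, I combine the two facts. The $\Sym$-invariance $p^\ks=p$ for all $\ks\in\Sym$ means all diagonal entries of $p(s)$ are equal, say to $d(s)$, and all off-diagonal entries are equal, say to $o(s)$ (this is the standard fact that the only $\Sym$-invariant symmetric matrices are affine combinations of $\identity_\D$ and $\one_\D$: take $\ks$ a transposition to equate any two diagonal entries and any two off-diagonal entries sharing an index, then use transitivity). Thus $p(s) = (d(s)-o(s))\identity_\D + o(s)\one_\D$. The constraint $\one_\D\cdot p(s) = \D\,d(s) + \D(\D-1)\,o(s) = 1$. On the other hand $\tr(p(s)) = \D\,d(s)$. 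Setting $\zeta(s) := \tfrac{\D\tr(p(s))-1}{\D-1} = \tfrac{\D^2 d(s)-1}{\D-1}$ and solving the two linear relations for $d(s),o(s)$ in terms of $\tr(p(s))$, a direct computation shows $d(s) = \tfrac{\zeta(s)}{\D}$ and $o(s) = \tfrac{1-\zeta(s)}{\D^2}$, which is exactly $p(s) = \Psi(\zeta(s))$ by \eqref{e.Psi_potts}; this is \eqref{e.p=Psi()}.

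I expect the main obstacle to be purely bookkeeping: making precise that \eqref{e.p_N=} genuinely characterizes $p_N$ as a conditional expectation to which Lemma~\ref{l.Gibbs_inv} can be applied (one must apply the symmetry at the level of the $n=2$ replica Gibbs average before disintegrating, since $p_N$ is defined only through the conditional expectation), and then checking that the $L^1([0,1];\S^\D)$-convergence of $p_N$ to $p$ is compatible with both the linear isometry $a\mapsto a^\ks$ and the linear functional $a\mapsto\one_\D\cdot a$ — both are continuous, so this is routine. The algebraic identification of $\Sym$-invariant symmetric matrices with $\mathrm{span}\{\identity_\D,\one_\D\}$ and the final solve for $d,o$ are elementary linear algebra.
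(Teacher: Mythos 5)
Your argument follows the same route as the paper: use Lemma~\ref{l.Gibbs_inv} together with~\eqref{e.p_N=} and Lemma~\ref{l.inv} to show $p_N^\ks = p_N$, pass to the $L^1$-limit, obtain the constraint $\one_\D\cdot p = 1$, and then use $\Sym$-invariance to pin down the diagonal and off-diagonal entries. The paper tests $\int_0^1 p_N\cdot\kappa$ against arbitrary bounded $\kappa:[0,1]\to\S^\D$, whereas you work directly with the conditional-expectation characterization of $p_N(\alpha\wedge\alpha')$; these are equivalent bookkeeping choices.

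There is, however, one genuine gap: you never verify that $\zeta(s) = \frac{\D\tr(p(s))-1}{\D-1}$ lies in $[0,1]$. This matters because the function $\Psi$ in~\eqref{e.Psi_potts} is defined only on $[0,1]$, so without this check the right-hand side of~\eqref{e.p=Psi()} is not yet well-defined. The paper handles it as follows. The lower bound $\zeta\geq 0$ comes from the fact that $p$ takes values in $\S^\D_+$: writing $p(s) = \Psi(\zeta(s))$ with $\Psi$ temporarily extended to all of $\R$, the eigenvalues of $\Psi(s)$ are $\tfrac{1}{\D}$ (once) and $\tfrac{s}{\D}$ (with multiplicity $\D-1$), so positive semi-definiteness forces $\zeta\geq 0$. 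The upper bound $\zeta\leq 1$ requires one more ingredient that you omitted: all entries of $p(s)$ are nonnegative for a.e.\ $s$. This follows by the same limiting argument you used for $\one_\D\cdot p = 1$, starting from the fact that the entries of $\sigma\sigma'^\intercal$ are nonnegative, using~\eqref{e.p_N=} and Lemma~\ref{l.inv}, and passing to the $L^1$-limit. Once entries are nonnegative, $\tr(p)\leq \sum_{k,k'}p_{k,k'}=1$ and hence $\zeta\leq 1$. You should add this step to make the proof complete.
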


\begin{proof}
Let $(p_N)_{N\in\N}$ be as in Proposition~\ref{p.p}.
Let $\kappa:[0,1]\to \S^\D$ be bounded and measurable. Assuming that $q$ is $\Sym$-invariant, we can compute, for every $N\in\N$ and $\ks\in\Sym$,
\begin{align*}
    \int_0^1 p_N^{\ks}(s)\cdot \kappa(s)\d s \stackrel{\eqref{e.a.b^s=}}{=} \int_0^1 p_N(s) \cdot \kappa^{\ks^{-1}}(s)\d s \stackrel{\text{L.}\ref{l.inv};\eqref{e.p_N=}}{=} \E \la \frac{\sigma\sigma'^\intercal}{N} \cdot \kappa^{\ks^{-1}} \Ll(\alpha\wedge\alpha'\Rr)\ra_{N,t,q}
    \\
    \stackrel{\eqref{e.a.b^s=};\eqref{e.sigmasigma^s=}}{=}\E \la \frac{\sigma^\ks\Ll((\sigma')^\ks\Rr)^\intercal}{N} \cdot \kappa \Ll(\alpha\wedge\alpha'\Rr)\ra_{N,t,q} \stackrel{\text{L.\ref{l.Gibbs_inv}}}{=} \E \la \frac{\sigma\sigma'^\intercal}{N} \cdot \kappa \Ll(\alpha\wedge\alpha'\Rr)\ra_{N,t,q} 
    \\
    \stackrel{\text{L.}\ref{l.inv};\eqref{e.p_N=}}{=} \int_0^1p_N(s)\cdot\kappa(s)\d s.
\end{align*}
Here, ``L.X'' stands for ``Lemma~X''.
Varying $\kappa$, we deduce $p_N^\ks = p_N$. Sending $N\to\infty$, by the convergence in Proposition~\ref{p.p}, we have $p^\ks=p$ and thus $p$ is $\Sym$-invariant.

Next, we derive~\eqref{e.p=Psi()}. Since the column vectors of $\sigma$ belong to $\Sigma$, the entries in each of them sum to $1$ and thus
\begin{align*}
    \sum_{k,k'=1}^\D \Ll(\sigma\sigma'^\intercal\Rr)_{k,k'} = \sum_{j=1}^N\sum_{k,k'=1}^\D\sigma_{k,j} \sigma_{k',j} = N.
\end{align*}
Therefore, in view of~\eqref{e.p_N=}, we have $\sum_{k,k'=1}^\D (p_N)_{k,k'}(\alpha\wedge\alpha')=1$ almost surely under $\E\la\cdot\ra_{N,t,q}$. Then, Lemma~\ref{l.inv} implies $\sum_{k,k'=1}^\D (p_N)_{k,k'}=1$ almost everywhere on $[0,1]$. The convergence of $(p_N)_{N\in\N}$ yields
\begin{align}\label{e.sumkk'p_k,k'=1}
    \sum_{k,k'=1}^\D p_{k,k'}=1
\end{align}
almost everywhere. Since the entries in $\sigma\sigma'^\intercal $ are nonnegative, by \eqref{e.p_N=}, Lemma~\ref{l.inv}, and the convergence, we also have that the entries of $p(s)$ are nonnegative for almost every $s\in[0,1]$.

For distinct $k,k',k'',k'''\in\{1,\dots,\D\}$, we consider permutations $(kk')$ and $(kk'')(k'k''')$ in the cyclic notation. We can see that the $\Sym$-invariance of $p$ implies $p_{k,k}=p_{k',k'}$ and $p_{k,k'}=p_{k'',k'''}$ almost everywhere. Hence, all diagonal entries of $p$ are identical and all off-diagonal entries are identical. Using this and \eqref{e.sumkk'p_k,k'=1}, we get
$p_{k,k} = \frac{1}{\D}\tr(p)$ for every $k$ and $p_{k,k'}= \frac{1-\tr(p)}{\D(\D-1)}$ for distinct $k,k'$.
Therefore $p= \frac{\D \tr(p)-1}{\D(\D-1)}\identity_\D + \frac{1-\tr(p)}{\D(\D-1)}\one_\D$. 
Recall $\Psi$ in~\eqref{e.Psi_potts} and notice that its domain is restricted to $[0,1]$. Let us lift this restriction for now and assume that $\Psi$ is defined as in~\eqref{e.Psi_potts} for every $s\in\R$. Then, we have $p = \Psi\Ll(\frac{D\tr(p)-1}{D-1}\Rr)$ and we want to verify $\frac{D\tr(p)-1}{D-1}\in[0,1]$. Since $p$ is $\S^\D_+$-valued, in view of the expression of $\Psi$, we must have $\frac{D\tr(p)-1}{D-1}\geq 0$. Since the entries in $p$ are nonnegative, we have $\tr(p)\leq \sum_{k,k'=1}^\D p_{k,k'}=1$ and thus $\frac{D\tr(p)-1}{D-1}\leq 1$. This completes our verification of \eqref{e.p=Psi()}.
\end{proof}

To close this section, we show that the variational formula for $f(t,0)$ can be simplified.
\begin{proposition}\label{p.f(t,0)=sup}
For every $t\geq 0$, 
\begin{align*}
    f(t,0) = \sup_{\zeta} \Ll\{\psi\Ll(t\nabla\xi\circ \Psi\circ\zeta\Rr)- t\int_0^1\theta(\Psi(\zeta(s)))\d s\Rr\} 
\end{align*}
where the infimum is taking over all left-continuous and increasing $\zeta:[0,1]\to[0,1]$.
\end{proposition}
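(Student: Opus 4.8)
The plan is to read the identity off the general Parisi formula of Proposition~\ref{p.f(t,q)=} at $q=0$, which says $f(t,0)=\sup_{p\in\C_\infty}\bigl\{\psi(t\nabla\xi\circ p)-t\int_0^1\theta(p(s))\,\d s\bigr\}$, by showing that in this supremum one may restrict attention to paths of the form $p=\Psi\circ\zeta$. Fix $t\ge0$ and write $\mcl G(\zeta)=\psi\bigl(t\nabla\xi\circ\Psi\circ\zeta\bigr)-t\int_0^1\theta(\Psi(\zeta(s)))\,\d s$ for left-continuous increasing $\zeta:[0,1]\to[0,1]$. The inequality ``$\ge$'' is the easy half. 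First I would record that $\Psi$ maps $[0,1]$ into $\S^\D_+$ and is increasing for $\cgeq$, since $\Psi(s)-\Psi(s')=\frac{s-s'}{\D}\bigl(\identity_\D-\frac1\D\one_\D\bigr)$ and $\identity_\D-\frac1\D\one_\D\cgeq0$. Hence for any admissible $\zeta$ the path $\Psi\circ\zeta$ lies in $\C_\infty$ (up to the a.e.\ identification, which also disposes of the càdlàg normalization), and then $t\nabla\xi\circ\Psi\circ\zeta\in\C_\infty$ too because $\nabla\xi$ preserves $\S^\D_+$, is increasing there by~\ref{i.xi_incre}, and is bounded on the compact set $\Psi([0,1])$. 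Taking $p=\Psi\circ\zeta$ in Proposition~\ref{p.f(t,q)=} gives $f(t,0)\ge\mcl G(\zeta)$, and a supremum over $\zeta$ settles this direction.

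The reverse inequality is where the work lies, and the obstacle is that $0\notin\C_\uparrow$, so Proposition~\ref{p.p} --- which produces a maximizer of $f(t,q)$ that Lemma~\ref{l.p_inv} then forces into the shape $\Psi\circ\zeta$ --- does not apply at $q=0$ directly. I would get around this by perturbing: for $\eps>0$ put $q_\eps(s)=\eps s\,\identity_\D$. This lies in $\C_\uparrow\cap L^\infty$ (take $c=\min\{\eps,1\}$ in the definition of $\C_\uparrow$), it is $\Sym$-invariant because $\identity_\D$ is, and $\int_0^1|q_\eps(s)|\,\d s\le\eps\sqrt\D$. Proposition~\ref{p.p} then provides $p_\eps\in\C\cap L^\infty_{\le1}\subset\C_\infty$ with $f(t,q_\eps)=\psi(q_\eps+t\nabla\xi\circ p_\eps)-t\int_0^1\theta(p_\eps(s))\,\d s$, and Lemma~\ref{l.p_inv}, via the $\Sym$-invariance of $q_\eps$, gives that $p_\eps$ is $\Sym$-invariant and $p_\eps=\Psi\circ\zeta_\eps$ a.e.\ with $\zeta_\eps=\frac{\D\tr(p_\eps)-1}{\D-1}$. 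Choosing the left-continuous representative of $p_\eps$ makes $\zeta_\eps$ left-continuous and increasing (monotonicity of $\tr$ coming from $\identity_\D\cgeq0$); the estimate inside the proof of Lemma~\ref{l.p_inv} shows $\zeta_\eps$ is $[0,1]$-valued; and fixing its (immaterial) value at $0$ makes it an admissible competitor for $\sup_\zeta\mcl G(\zeta)$. Since $\psi$ and the $\theta$-integral only see the a.e.\ class of their argument, substituting $p_\eps=\Psi\circ\zeta_\eps$ yields $f(t,q_\eps)=\psi\bigl(q_\eps+t\nabla\xi\circ\Psi\circ\zeta_\eps\bigr)-t\int_0^1\theta(\Psi(\zeta_\eps(s)))\,\d s$.

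To finish I would invoke the Lipschitz bound~\eqref{e.lip_psi_f} twice: once to replace $\psi(q_\eps+t\nabla\xi\circ\Psi\circ\zeta_\eps)$ by $\psi(t\nabla\xi\circ\Psi\circ\zeta_\eps)$ at cost $\int_0^1|q_\eps|\le\eps\sqrt\D$, giving $f(t,q_\eps)\le\mcl G(\zeta_\eps)+\eps\sqrt\D\le\sup_\zeta\mcl G(\zeta)+\eps\sqrt\D$; and once to get $|f(t,0)-f(t,q_\eps)|\le\eps\sqrt\D$. Together these give $f(t,0)\le\sup_\zeta\mcl G(\zeta)+2\eps\sqrt\D$, and letting $\eps\to0$ completes the proof. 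The one genuinely delicate point is the choice of the perturbation $q_\eps$ --- it must simultaneously lie in $\C_\uparrow$, be $\Sym$-invariant (so that Lemma~\ref{l.p_inv} applies to its maximizer), and go to $0$ in the $L^1$-norm controlling $f$; once such a $q_\eps$ is available, the rest is routine bookkeeping with a.e.\ representatives.
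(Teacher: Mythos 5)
Your proof is correct and follows essentially the same route as the paper: perturb $q=0$ to $q_\eps(s)=\eps s\,\identity_\D$ (the paper uses $q_K(s)=\tfrac{s}{K}\identity_\D$, the same thing) so as to land in $\C_\uparrow\cap L^\infty$ while staying $\Sym$-invariant, invoke Proposition~\ref{p.p} and Lemma~\ref{l.p_inv} to obtain a maximizer of the form $\Psi\circ\zeta_\eps$, and pass to the limit via the Lipschitz bound~\eqref{e.lip_psi_f}, with the easy inequality read off directly from Proposition~\ref{p.f(t,q)=} at $q=0$. Nothing to add.
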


For $a,b\in\R$ and $\eps>0$, we write
\begin{align}\label{e.approx_eps}
    a\approx_\eps b \quad \Longleftrightarrow\quad |a-b|\leq \eps . 
\end{align}

\begin{proof}
For each $K\in\N$, we set $q_K(s) = \frac{s}{K}\identity_\D$ for $s\in[0,1]$ and we have $q_K \in \C_\uparrow\cap L^\infty$. Let $p_K$ be given by Proposition~\ref{p.p} associated with $(t,q_K)$. 
Setting $\zeta_K = \frac{\D \tr(p_K)-1}{D-1}$ and using Lemma~\ref{l.p_inv}, we have $p_K = \Psi\circ \zeta_K$. Since we identify paths that coincide almost everywhere and $\zeta_K$ is clearly increasing, we can let $\zeta_K$ be the left-continuous version. Also, we have verified in the above proof that $0\leq \zeta_K\leq 1$. 
Then, for sufficiently large $K$,
\begin{align*}
    f(t,0)\stackrel{\eqref{e.lip_psi_f}}{\approx_\eps} f(t,q_K)  \stackrel{\eqref{e.f(t,q)=...p...}}{=} \psi\Ll(q_K+ t\nabla\xi\circ \Psi\circ\zeta_K\Rr)- t\int_0^1\theta\Ll(\Psi\Ll(\zeta_K(s)\Rr)\Rr)\d s
    \\
    \stackrel{\eqref{e.lip_psi_f}}{\approx_\eps} \psi\Ll(t\nabla\xi\circ \Psi\circ\zeta_K\Rr)- t\int_0^1\theta\Ll(\Psi\Ll(\zeta_K(s)\Rr)\Rr)\d s.
\end{align*}
Given Proposition~\ref{p.f(t,q)=}, the above along with the observation $\Psi\circ\zeta_K\in \C_\infty$ implies the announced identity.
\end{proof}

\section{Proofs of main results}\label{s.pf}

We first prove Theorem~\ref{t.1}. We start by defining the functional $\sP$ using the Ruelle probability cascade $\fR$ introduced in Section~\ref{s.enriched_free_energy}.
Recall $\Pi$ in~\eqref{e.Pi}.
For each $\pi\in\Pi$, set
\begin{align}\label{e.sP=}
    \sP(\pi) = \E \log \iint \exp\Ll(w^{\frac{1}{2}\nabla\xi\circ \pi}(\alpha)\cdot \tau - \frac{1}{2}\nabla\xi\circ\pi(1)\cdot\tau\tau^\intercal\Rr)\d P_1(\tau) \d \fR(\alpha) 
    \\
    + \frac{1}{2}\int_0^1\theta(\pi(s))\d s. \notag
\end{align}

\begin{proof}[Proof of Theorem~\ref{t.1}]
Comparing the definition of $F_N$ in~\eqref{e.F_N=} with that of $f_N$ in~\eqref{e.f_N}, we have $F_N = -f_N(\frac{1}{2},0)$. Comparing the definition of $\psi$ in~\eqref{e.psi=} and that of $\sP$ in~\eqref{e.sP=}, we have
\begin{align}\label{e.sP=-psi+...}
    \sP(\pi) = -\psi\Ll(\frac{1}{2}\nabla\xi\circ \pi\Rr)+ \frac{1}{2}\int_0^1\theta(\pi(s))\d s.
\end{align}
Combining these relations with Proposition~\ref{p.f(t,0)=sup}, we can obtain the desired result. 
\end{proof}

Next, we turn to the proof of Theorem~\ref{t.main}.
We start by recalling the results on the Parisi PDE~\eqref{e.Parisi_PDE} with terminal condition~\eqref{e.terminal_cond}.
Let $\cM_\d$ denote the subcollection of $\cM$ in~\eqref{e.cM} consisting of step functions with finite steps.
Recall $\mu$ defined in~\eqref{e.mu=}.
Then, under the condition
\begin{align}\label{e.mu_cond}
    \text{$ \frac{\d}{\d s}\sqrt{\mu(t)-\mu(s)} $ exists for every $s,t$ satisfying $0\leq s<t\leq 1$,}
\end{align}
we can solve the PDE~\eqref{e.Parisi_PDE} by the Cope--Hopf transformation. More precisely, we can construct explicitly a continuous function $\Phi_\alpha:[0,1]\times \R^\D\to\R$ as in \cite[Definition~2.3]{chen2023Parisi} (with $e^{-\frac{1}{2}\mu(1)\cdot\sigma\sigma^\intercal}\d P_1(\sigma)$ substituted for $P_1$ therein) such that $\Phi_\alpha$ is differentiable and satisfies \eqref{e.Parisi_PDE} at every continuity point of $\alpha$, which is verified in \cite[Lemma~2.4]{chen2023Parisi}. 
By construction, the terminal condition~\eqref{e.terminal_cond} is also satisfied. 

We put the $L^1$-metric on $\cM$ and put the supremum norm on continuous functions on $[0,1]\times \R^\D$. Then, \cite[Lemma~2.10]{chen2023Parisi} verifies that $\cM_\d \ni \alpha \mapsto \Phi_\alpha$ is Lipschitz. Hence, for any $\alpha\in\cM$, we can define $\Phi_\alpha$ to be the limit of $(\Phi_{\alpha_n})_{n\in\N}$ for $(\alpha_n)_{n\in\N}$ in $\cM_\d$ converging to $\alpha$. This is exactly the construction in \cite[Definition~2.11]{chen2023Parisi}, which is well-defined according to \cite[Corollary~2.12]{chen2023Parisi}. In particular, if $\alpha$ is continuous, then \cite[Proposition~2.17~(4)]{chen2023Parisi} shows that $\Phi_\alpha$ satisfies \eqref{e.Parisi_PDE} in the classical sense. 

We need the Lipschitzness of $\alpha\mapsto \Phi_\alpha$ given in \cite[Corollary~2.12]{chen2023Parisi}: there is a constant $C>0$ depending only on $\xi$ and $\Psi$ such that
\begin{align}\label{e.lip_Psi}
    \sup_{[0,1]\times \R^\D } \Ll|\Phi_\alpha(s,x) - \Phi_{\alpha'}(s,x)\Rr|\leq C\int_0^1\Ll|\alpha(s)-\alpha'(s)\Rr|\d s,\quad\forall \alpha,\alpha'\in \cM.
\end{align}

The condition~\eqref{e.mu_cond} is the same as in \cite[(2.6)]{chen2023Parisi}.
By \cite[Remark~2.2]{chen2023Parisi}, all results recalled above hold under~\eqref{e.mu_cond}.

If $\xi$ satisfies~\eqref{i.t.main_1} in Theorem~\ref{t.main}, then \cite[Lemma~4.2~(2)]{chen2023Parisi} implies~\eqref{e.mu_cond}. If $\xi$ satisfies~\eqref{i.t.main_2}, then \cite[Lemma~5.3~(2)]{chen2023Parisi} yields~\eqref{e.mu_cond}.
Hence, under the assumption on $\xi$ in Theorem~\ref{t.main}, the Parisi PDE solution $\Psi_\alpha$ exists and $\sF$ in \eqref{e.sF} is well-defined.

To prove Theorem~\ref{t.main}, we state \cite[Lemma~4.1]{chen2023Parisi} below, which holds due to~\eqref{e.mu_cond}.
The left-continuous inverse of $\alpha\in\cM$ is given by
\begin{align*}\alpha^{-1}(s) = \inf\{t\in[0,1]:s\leq \alpha(t)\},\quad\forall s\in [0,1].
\end{align*}
\begin{lemma}[\cite{chen2023Parisi}]\label{l.sF=sP}
Suppose that $\xi$ satisfies~\eqref{i.t.main_1} or~\eqref{i.t.main_2} in Theorem~\ref{t.main}.
If $\alpha\in\cM_\d$, then $\sF(\alpha)=\sP(\Psi\circ\alpha^{-1})$.
\end{lemma}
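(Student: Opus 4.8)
The plan is to reduce the identity to the classical equality ``Parisi-PDE functional $=$ Ruelle-cascade functional'' and to absorb the remaining terms by an elementary integration by parts; indeed the statement is precisely \cite[Lemma~4.1]{chen2023Parisi} once one checks that our data fit its setting, with the reference measure there taken to be $e^{-\frac12\mu(1)\cdot\sigma\sigma^\intercal}\d P_1(\sigma)$ (as flagged after \eqref{e.mu_cond}), with PDE data $\mu=\nabla\xi\circ\Psi$, and with \eqref{e.mu_cond} already verified under either hypothesis on $\xi$. I would still spell out the mechanism, everything being explicit for $\alpha\in\cM_\d$. Write $\alpha$ through its finitely many jumps, with values $0=\zeta_0<\zeta_1<\dots<\zeta_r=1$ and jump locations $t_1<t_2<\dots$ in $(0,1]$; then $\alpha^{-1}(v)=t_j$ for $v\in(\zeta_{j-1},\zeta_j]$, so $\pi:=\Psi\circ\alpha^{-1}$ is the step path equal to $\Psi(t_j)$ on $(\zeta_{j-1},\zeta_j]$, with $\pi(1)=\Psi(\alpha^{-1}(1))$, $\pi(0^+)=\Psi(t_1)$, and $\alpha\equiv0$ on $[0,t_1)$.

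First I would dispose of the ``self-energy'' terms. On each interval on which $\alpha$ is constant one has $\frac{\d}{\d s}\xi(\Psi(s))=\mu(s)\cdot\dot\Psi(s)$, so integrating by parts (both $\mu=\nabla\xi\circ\Psi$ and $\Psi$ being Lipschitz, hence absolutely continuous) and using $\theta(a)=a\cdot\nabla\xi(a)-\xi(a)$ from \eqref{e.theta} gives $\int_{t_j}^{t_{j+1}}\Psi(s)\cdot\dot\mu(s)\,\d s=\theta(\Psi(t_{j+1}))-\theta(\Psi(t_j))$. Summing these against the weights $\zeta_j$ and performing an Abel summation (using $\zeta_0=0$, $\zeta_r=1$) yields $\int_0^1\alpha(s)\Psi(s)\cdot\dot\mu(s)\,\d s=\theta(\Psi(1))-\int_0^1\theta(\pi(s))\,\d s$; hence $\frac12\theta(\Psi(1))-\frac12\int_0^1\alpha(s)\Psi(s)\cdot\dot\mu(s)\,\d s=\frac12\int_0^1\theta(\pi(s))\,\d s$, which is exactly the additive $\theta$-term of $\sP(\pi)$ in \eqref{e.sP=}.

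It remains to identify $\E[\Phi_\alpha(0,\sqrt{\mu(0)}\eta)]$ with the cascade integral in \eqref{e.sP=}. For the step function $\alpha$ the Cole--Hopf construction of $\Phi_\alpha$ (\cite[Definition~2.3]{chen2023Parisi}) becomes a finite backward recursion: starting from the terminal condition \eqref{e.terminal_cond} at $s=1$, across each interval on which $\alpha$ takes a constant value $\zeta$ one convolves with a centered Gaussian whose covariance is the increment of $\mu$ over that interval and then, when $\zeta>0$, applies $\Phi\mapsto\zeta^{-1}\log\E\exp(\zeta\Phi)$ (with $\zeta=0$ read as a plain expectation); at the end the outer average against $\sqrt{\mu(0)}\eta$ contributes one further convolution, which together with the flat initial segment $[0,t_1)$ amounts to base covariance $\mu(t_1)=\nabla\xi(\pi(0^+))$, while a flat terminal segment $\alpha\equiv1$ is absorbed into \eqref{e.terminal_cond}, changing its $\tau\tau^\intercal$-weight from $\mu(1)$ to $\mu(\alpha^{-1}(1))=\nabla\xi(\pi(1))$. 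On the other side, for the step path $\pi$ the Gaussian field of \eqref{e.sP=} splits over $\supp\fR$, by the standard structure of the Ruelle cascade with uniform overlap (\cite[Section~2.3]{pan}), into independent level increments encoding exactly those same $\mu$-increments, integrated out at the corresponding weights; integrating $\exp(\cdot)$ against $\fR$ and applying $\E\log$ reproduces the same iterated $\log$--$\E\exp$ expression, with matching terminal $\tau\tau^\intercal$-weight $\nabla\xi(\pi(1))$. Comparing the two recursions term by term, together with the previous paragraph, gives $\sF(\alpha)=\sP(\Psi\circ\alpha^{-1})$.

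The step I expect to be the main obstacle is precisely this last comparison: aligning the Hopf--Cole exponent jumps of $\Phi_\alpha$ with the cascade weights $\zeta_1,\dots,\zeta_{r-1}$, justifying that the continuous cascade $\fR$ collapses (for a finitely-stepped path) to a finite cascade with those weights, and bookkeeping the boundary conventions at $s=0$ and $s=1$ (the flat segments of $\alpha$, the value of $\alpha^{-1}(1)$, and the matching of terminal $\tau\tau^\intercal$-weights). Since both $\Phi_\alpha$ and $\fR$ are here imported wholesale from \cite{chen2023Parisi} and \cite{pan}, this recursion-matching can largely be quoted rather than redone; in particular the statement is \cite[Lemma~4.1]{chen2023Parisi}.
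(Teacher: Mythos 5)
Your proof is correct and follows the same route as the paper, which proves this lemma by directly citing \cite[Lemma~4.1]{chen2023Parisi} after noting that condition~\eqref{e.mu_cond} holds under either hypothesis on $\xi$. The extra detail you supply — the Abel-summation identity turning $\tfrac12\theta(\Psi(1))-\tfrac12\int_0^1\alpha\,\Psi\cdot\dot\mu$ into $\tfrac12\int_0^1\theta(\pi)$, and the matching of the finite Cole--Hopf recursion for $\Phi_\alpha$ with the discrete-cascade evaluation of the RPC integral (with the flat segments of $\alpha$ absorbed into the base covariance $\mu(t_1)$ and the terminal weight $\mu(\alpha^{-1}(1))$) — is exactly the content of the cited lemma and checks out.
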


\begin{proof}[Proof of Theorem~\ref{t.main}]
Assuming that $\xi$ is convex on $\S^\D_+$, we first show~\eqref{e.limF_N=infF}.
In view of~\eqref{e.lip_psi_f} and~\eqref{e.sP=-psi+...}, we have that $\zeta \mapsto \sP(\Psi\circ\zeta)$ is Lipschitz over all increasing $\zeta$ satisfying $0\leq \zeta\leq1$.
Hence, for any $\eps>0$, we can find an increasing left-continuous step function $\zeta_\eps$ such that $\inf_\zeta \sP(\Psi\circ\zeta)\approx_\eps \sP(\Psi\circ \zeta_\eps)$, where $\approx_\eps$ is defined in~\eqref{e.approx_eps}. Choosing $\alpha_\eps\in\cM_\d$ to satisfy $\alpha_\eps^{-1} = \zeta_\eps$, we apply Lemma~\ref{l.sF=sP} to deduce $\inf_\zeta\sP(\Psi\circ\zeta)\geq \inf_\alpha\sF(\alpha)$. 

For the matching bound, we use~\eqref{e.lip_Psi} and the regularity of $\xi$ in~\ref{i.xi_loc_lip} to see that $\alpha\mapsto \sF(\alpha)$ is Lipschitz. Hence, for any $\eps>0$, we can find $\alpha_\eps\in\cM_\d$ such that $\inf_\alpha \sF(\alpha)\approx_\eps \sF(\alpha_\eps)$. Then, setting $\zeta_\eps=\alpha_\eps^{-1}$ and using Lemma~\ref{l.sF=sP}, we can get the matching bound and complete the proof of~\eqref{e.limF_N=infF}.

Next, we verify the strict convexity and the uniqueness of minimizers. If $\xi$ satisfies~\eqref{i.t.main_1} which is exactly the condition in \cite[Theorem~1.2]{chen2023Parisi}, then we can apply the theorem to get the announced results. If $\xi$ satisfies~\eqref{i.t.main_2}, then we can appeal to \cite[Theorem~5.1]{chen2023Parisi}.
\end{proof}

\small
\bibliographystyle{abbrv}

\begin{thebibliography}{10}

\bibitem{auffinger2015properties}
A.~Auffinger and W.-K. Chen.
\newblock {On properties of Parisi measures}.
\newblock {\em Probability Theory and Related Fields}, 161(3-4):817--850, 2015.

\bibitem{aufche}
A.~Auffinger and W.-K. Chen.
\newblock The {P}arisi formula has a unique minimizer.
\newblock {\em Comm. Math. Phys.}, 335(3):1429--1444, 2015.

\bibitem{bates2022free}
E.~Bates and Y.~Sohn.
\newblock Free energy in multi-species mixed p-spin spherical models.
\newblock {\em Electronic Journal of Probability}, 27:1--75, 2022.

\bibitem{bates2023parisi}
E.~Bates and Y.~Sohn.
\newblock {Parisi formula for balanced Potts spin glass}.
\newblock {\em arXiv preprint arXiv:2310.06745}, 2023.

\bibitem{bovklim}
A.~Bovier and A.~Klimovsky.
\newblock The {A}izenman-{S}ims-{S}tarr and {G}uerra's schemes for the {SK}
  model with multidimensional spins.
\newblock {\em Electron. J. Probab.}, 14:no. 8, 161--241, 2009.

\bibitem{caltagirone2012dynamical}
F.~Caltagirone, G.~Parisi, and T.~Rizzo.
\newblock {Dynamical critical exponents for the mean-field Potts glass}.
\newblock {\em Physical Review E}, 85(5):051504, 2012.

\bibitem{chen2022pde}
H.-B. Chen.
\newblock {A PDE perspective on the Aizenman-Sims-Starr scheme}.
\newblock {\em arXiv preprint arXiv:2212.09542}, 2022.

\bibitem{chen2023on}
H.-B. Chen.
\newblock {On the self-overlap in vector spin glasses}.
\newblock {\em arXiv preprint arXiv:2311.09880}, 2023.

\bibitem{chen2023Parisi}
H.-B. Chen.
\newblock {Parisi PDE and convexity for vector spins}.
\newblock {\em arXiv preprint arXiv:2311.10446}, 2023.

\bibitem{chen2023self}
H.-B. Chen.
\newblock {Self-overlap correction simplifies the Parisi formula for vector
  spins}.
\newblock {\em arXiv preprint arXiv:2303.16284}, 2023.

\bibitem{HJ_critical_pts}
H.-B. Chen and J.-C. Mourrat.
\newblock On the free energy of vector spin glasses with non-convex
  interactions.
\newblock {\em arXiv preprint arXiv:2311.08980}, 2023.

\bibitem{chen2013aizenman}
W.-K. Chen.
\newblock {The Aizenman-Sims-Starr scheme and Parisi formula for mixed p-spin
  spherical models}.
\newblock {\em Electronic Journal of Probability}, 18:1--14, 2013.

\bibitem{chen2015partial}
W.-K. Chen.
\newblock Partial results on the convexity of the parisi functional with pde
  approach.
\newblock {\em Proceedings of the American Mathematical Society},
  143(7):3135--3146, 2015.

\bibitem{de1995static}
E.~De~Santis, G.~Parisi, and F.~Ritort.
\newblock {On the static and dynamical transition in the mean-field Potts
  glass}.
\newblock {\em Journal of Physics A: Mathematical and General}, 28(11):3025,
  1995.

\bibitem{HJbook}
T.~Dominguez and J.-C. Mourrat.
\newblock Statistical mechanics of mean-field disordered systems: a
  {Hamilton-Jacobi} approach.
\newblock {\em {Preprint, arXiv:2311.08976}}, \noop{2023}2023.

\bibitem{elderfield1983novel}
D.~Elderfield and D.~Sherrington.
\newblock {Novel non-ergodicity in the Potts spin glass}.
\newblock {\em Journal of Physics C: Solid State Physics}, 16(32):L1169, 1983.

\bibitem{elderfield1983curious}
D.~Elderfield and D.~Sherrington.
\newblock {The curious case of the Potts spin glass}.
\newblock {\em Journal of Physics C: Solid State Physics}, 16(15):L497, 1983.

\bibitem{gross1985mean}
D.~J. Gross, I.~Kanter, and H.~Sompolinsky.
\newblock {Mean-field theory of the Potts glass}.
\newblock {\em Physical review letters}, 55(3):304, 1985.

\bibitem{gue03}
F.~Guerra.
\newblock Broken replica symmetry bounds in the mean field spin glass model.
\newblock {\em Comm. Math. Phys.}, 233(1):1--12, 2003.

\bibitem{jagtob}
A.~Jagannath and I.~Tobasco.
\newblock A dynamic programming approach to the {P}arisi functional.
\newblock {\em Proc. Amer. Math. Soc.}, 144(7):3135--3150, 2016.

\bibitem{marinari1999glassy}
E.~Marinari, S.~Mossa, and G.~Parisi.
\newblock {Glassy Potts model: A disordered Potts model without a ferromagnetic
  phase}.
\newblock {\em Physical Review B}, 59(13):8401, 1999.

\bibitem{mezard1987spin}
M.~M{\'e}zard, G.~Parisi, and M.~A. Virasoro.
\newblock {\em Spin glass theory and beyond: An Introduction to the Replica
  Method and Its Applications}, volume~9.
\newblock World Scientific Publishing Company, 1987.

\bibitem{mourrat2021nonconvex}
J.-C. Mourrat.
\newblock Nonconvex interactions in mean-field spin glasses.
\newblock {\em Probab. Math. Phys.}, 2(2):281--339, 2021.

\bibitem{mourrat2022parisi}
J.-C. Mourrat.
\newblock The {P}arisi formula is a {H}amilton-{J}acobi equation in
  {W}asserstein space.
\newblock {\em Canad. J. Math.}, 74(3):607--629, 2022.

\bibitem{mourrat2023free}
J.-C. Mourrat.
\newblock Free energy upper bound for mean-field vector spin glasses.
\newblock {\em Ann. Inst. Henri Poincar\'{e} Probab. Stat.}, 59(3):1143--1182,
  2023.

\bibitem{mourrat2020extending}
J.-C. Mourrat and D.~Panchenko.
\newblock Extending the {P}arisi formula along a {H}amilton-{J}acobi equation.
\newblock {\em Electronic Journal of Probability}, 25:Paper No. 23, 17, 2020.

\bibitem{nishimori1983gauge}
H.~Nishimori and M.~J. Stephen.
\newblock {Gauge-invariant frustrated Potts spin-glass}.
\newblock {\em Physical Review B}, 27(9):5644, 1983.

\bibitem{panchenko2005question}
D.~Panchenko.
\newblock {A question about the Parisi functional.}
\newblock {\em Electronic Communications in Probability [electronic only]},
  10:155--166, 2005.

\bibitem{pan05}
D.~Panchenko.
\newblock Free energy in the generalized {S}herrington--{K}irkpatrick mean
  field model.
\newblock {\em Rev. Math. Phys.}, 17(7):793--857, 2005.

\bibitem{pan}
D.~Panchenko.
\newblock {\em The {S}herrington--{K}irkpatrick {M}odel}.
\newblock Springer Monographs in Mathematics. Springer, New York, 2013.

\bibitem{panchenko2014parisi}
D.~Panchenko.
\newblock {The Parisi formula for mixed $ p $-spin models}.
\newblock {\em The Annals of Probability}, 42(3):946--958, 2014.

\bibitem{pan.multi}
D.~Panchenko.
\newblock The free energy in a multi-species {S}herrington--{K}irkpatrick
  model.
\newblock {\em Ann. Probab.}, 43(6):3494--3513, 2015.

\bibitem{pan.potts}
D.~Panchenko.
\newblock Free energy in the {P}otts spin glass.
\newblock {\em Ann. Probab.}, 46(2):829--864, 2018.

\bibitem{pan.vec}
D.~Panchenko.
\newblock Free energy in the mixed {$p$}-spin models with vector spins.
\newblock {\em Ann. Probab.}, 46(2):865--896, \noop{2019}2018.

\bibitem{parisi79}
G.~Parisi.
\newblock Infinite number of order parameters for spin-glasses.
\newblock {\em Phys.\ Rev.\ Lett.}, 43(23):1754, 1979.

\bibitem{parisi80}
G.~Parisi.
\newblock A sequence of approximated solutions to the {SK} model for spin
  glasses.
\newblock {\em J. Phys. A}, 13(4):L115--L121, 1980.

\bibitem{tal.sph}
M.~Talagrand.
\newblock Free energy of the spherical mean field model.
\newblock {\em Probab. Theory Related Fields}, 134(3):339--382, 2006.

\bibitem{Tpaper}
M.~Talagrand.
\newblock The {P}arisi formula.
\newblock {\em Ann. of Math. (2)}, 163(1):221--263, 2006.

\bibitem{talagrand2006parisi}
M.~Talagrand.
\newblock Parisi measures.
\newblock {\em Journal of Functional Analysis}, 231(2):269--286, 2006.

\end{thebibliography}
\newcommand{\noop}[1]{} \def\cprime{$'$}

\end{document}